\newtheorem{theorem}{Theorem}
\newtheorem{proposition}{Proposition}
\newtheorem{corollary}[theorem]{Corollary}
\title{Telegraph random evolutions on a circle }
\numberwithin{equation}{section}
\newcommand{\es}{{\mathds  E}}
\newcommand{\sgn}{\text{sgn}}
\newcommand{\de}{\mathrm{d}}
\theoremstyle{remark}
\newtheorem{remark}{Remark}[section]
\author{Alessandro De Gregorio, Francesco Iafrate}
\address{Department of Statistical Sciences, ``Sapienza" University of Rome,
	P.le Aldo Moro, 5 - 00185, Rome, Italy}
\email{alessandro.degregorio@uniroma1.it}
\email{francesco.iafrate@uniroma1.it}
\date{\today}
\begin{document}
	
	\maketitle
	%\tableofcontents
	
	\begin{abstract}
		We consider the random evolution described by the motion of a particle moving on a circle alternating the angular velocities
		$ \pm c $ and
		changing rotation at Poisson random times, resulting in a telegraph process over the circle.
		We study the analytic properties of the semigroup it generates as well as its probability distribution. The asymptotic behavior of the wrapped process is also studied in terms of circular Brownian motion. 
		Besides, it is possible to derive a stochastic model for harmonic oscillators with random changes in direction and we give a diffusive approximation of this process. Furthermore, we introduce  some extensions of the circular telegraph model in  the asymmetric case
		and for non-Markovian waiting times as well. In this last case, we also provide some asymptotic considerations. 
		\end{abstract}
		{\it Keywords}: angular velocity, circular Brownian motion, complex telegraph equation, random harmonic oscillator, semigroup operator, wrapped probability distribution

	\section{Introduction}
	
	Random evolutions are models for dynamical systems characterized by some degree of randomness
	stemming from the fact that the phenomenon takes place in a  ``random environment'' 
	which changes the way the system evolves over time. 
	The prototypical example of a random evolution is the \emph{telegraph process}, describing the motion on the line of a particle moving at constant speed $c$ but randomly flipping direction, forward and backward, at exponential times due for example to random collisions, which represent in this case the aforementioned effect of a random environment. This random process is very intuitive and it leads to describe well some stochastic motions arising in statistical physics and biology. For this reason	over the years many researchers studied the telegraph random model and proposed several generalizations (also involving jumps); see, e.g., \cite{ors1}, \cite{foong2}, \cite{ors2}, \cite{rat1}, \cite{beghin}, \cite{iac}, \cite{dicresc2}, \cite{dicresc3}, \cite{dicresc4}, \cite{dicresc6}, \cite{bog}, \cite{stadje}, \cite{zacks}, \cite{ors3}, \cite{dg},  \cite{lopez} and \cite{garra}. In  \cite{wat} and \cite{font} is deepened the asymptotic analysis of the telegraph motion. Some financial applications of the telegraph process are contained in \cite{kolrat}. In Section 2 we recall the main results about the telegraph process.

	Formally, such random systems can be modeled by means of a collection of semigroup operators $ \{\mathfrak T_v(t), t \geq 0\}, v \in S,$ acting on functions $ f $ in some Banach space, depending on a parameter $ v $ varying in a space $ S $ which describes the possible modes 
	of evolution of the system  (see, e.g., \cite{griegoh} and \cite{pinsky}) . In this abstract setting the parameter is then allowed to switch according to a random process, with values in $ S $, and the random evolution will be the composition of the corresponding sequence of semigroups. For example in the case of a telegraph process $ \{X(t), t\geq 0\} $ the underlying semigroup $ \mathfrak T_v(t) $ 
	is a translation with velocity $ v $, i.e. $ \mathfrak T_v(t) f(x) = f(x +vt) $ and the velocities change as a two valued continuous-time Markov process $ \{ V(t), t \geq 0\} $ taking values in $ S = \{-c,c\}$. 
	
	Our aim in this paper is to study the properties of a telegraph random evolution on a circle with radius $r$ denoted by $\mathbb S_r:=\{z\in\mathbb C: |z|=r\},0<r\leq 1$; that is the random process resulting from a uniform motion of a particle on a circle, undergoing a Markov switching of the direction of rotation. We assume that the starting point of the particle is $z\in\mathbb S_r$ which performs its displacements with angular speed $c.$ In the framework of random evolutions this process can be seen as an alternating application of a rotation semigroup $ \mathfrak T_v(t) $ with angle $ vt $ and angular velocities $ v = \pm c $; i.e. it can be represented as  $\mathfrak T_v(t) f(z) = f(z e^{ivt})$.
	The resulting circular random process $Z:= \{ Z(t),  t \geq 0\} $ can also be thought as a telegraph process  wrapped onto a circle by means of a complex exponential transformation
	\[
	Z(t) = ze^{i X(t)} \,, \quad t \geq 0.
	\]
	An in-depth examination of the \emph{circular telegraph process} from a functional-analytic point of view
	is carried out in Section 3, where we study the properties of the family of operators
	\[
	T_t f(z,v) = \es_{z,v}[f(Z(t),V(t))]  \,  \quad t \geq 0,
	\]
	where $z =re^{i\theta}, \, \theta\in[0,2\pi), 0<r\leq 1,\, v\in\{-c,c\}$, including the expressions of the generator and the resolvent. For $ z \mapsto f(z,v) $ analytic in the unit disc we are able to give explicit formulas for these operators. 
	As recalled in Section 2, the distribution of the telegraph process is related to a hyperbolic partial differential equation (the \emph{telegraph equation}) of the form
	\begin{equation*}\label{eq:telpde-intro}
	\frac{\partial^2 u}{\partial t^2}(x,t)+2\lambda \frac{\partial u}{\partial t}(x,t)=c^2 \frac{\partial^2 u}{\partial x^2}(x,t),\quad x\in \mathbb R, t\geq 0.
	\end{equation*}
	We show that an analogous connection can be established between the circular process $ Z $
	and a complex version of the telegraph equation on the unit disc
	\begin{equation*}\label{eq:cte-intro}
	\frac{\partial^2 p}{\partial t^2}+2\lambda \frac{\partial p}{\partial t}=-c^2 z^2[p''+p']=c^2\frac{\partial^2 p}{\partial\theta^2},\quad z=re^{i\theta}, z\neq 0, \theta\in[0,2\pi), 0<r<1.
	\end{equation*}
	Problems related to the \emph{complexification} of wave and telegraph equations have been studied in \cite{GGG} and \cite{GGG2}.
	
	In Section 4 we turn our attention to the probabilistic features of the process $ Z$. By exploiting some concepts coming from the literature concerning circular random variables (see, e.g., \cite{mardia}) we give the distribution, the joint moments and the characteristic function of the circular telegraph process.
	
	It is in general of great interest in the context of random evolution the study of their asymptotic behavior. In the case of the telegraph process it is well-known that it converges to a Brownian motion when the speed $ c $ and the rate of the exponential waiting times $ \lambda $ tend to infinity in such a way that $ \lambda/ c^2 \to 1 $. Analogous results hold in the general setting of random flights in $ \mathbb R^d $, see e.g. \cite{hor}, \cite{ghosh}. We analyze this type of weak convergence problems in Section 5. In particular it holds that the suitably scaled circular telegraph process  converges to a a circular Brownian motion
	$\mathfrak B(t):=e^{i B(t)},$ where $B:=\{B(t),t\geq 0\}$ is a standard Brownian motion.

	Starting from a circular telegraph process it is possible to build models for \emph{random harmonic oscillators}. Models of random harmonic oscillators are of great interest in physics and widely studied in the context of Langevin SDEs with a harmonic potential. They can be used to model for example the behavior of an oscillator in a fluid, possibly with a random damping term, See e.g. \cite{gitterman1} for a survey of the results, where the author also considers the case where the driving noise is a colored dichotomous noise, in particular a telegraph signal.
	In Section 6 we deal with an alternative model for random harmonic oscillators, based on the harmonic components of the circular telegraph process. These can be seen as harmonic oscillators changing direction at exponential times. Furthermore, we are able to explicitly derive the distribution function and the density of such processes as well as their diffusion approximation.
	
	Finally we explore different generalizations of the wrapped up telegraph process. A first possibility is to start from an asymmetric telegraph process: this allows  to have two different speeds along the two directions, possibly with different switching rates. 
	A second idea is to assume that the waiting time $ D $ distribution is heavy tailed instead of exponential, i.e.  $\mathds P(D>x) \sim x^{-\alpha}$,  $ 0 < \alpha < 2, \alpha \neq 1 $; this means that the waiting times can have infinite expectation and variance. In the first case we can give the distribution of the resulting circular telegraph process, while in the second case we are interested in the asymptotic behavior. In particular for $ 0 < \alpha < 1, $ we have that the convergence of the rescaled telegraph-type circular process with heavy-tailed waiting times converges to a time-changed symmetric circular stable process.

	\section{The telegraph process in a nutshell}
	
	The classical telegraph process $X:=\{X(t),t\geq 0\}$ describes the motion of a particle starting from the origin and moving on the real line with constant speed $c>0.$ The particle reverses its direction when a Poisson event occurs; i.e. the particle  moves forward and backward for a random exponential time. Hence, this random motion is a uniform motion with randomly alternating velocities and was introduced in \cite{gold} and \cite{kac}. The position of the telegraph process at time $t\geq 0$ is equal to
	\begin{equation}\label{eq:tel}
	X(t):=\int_0^t V(s) \de s=V(0)\sum_{k=1}^{N(t)-1}(-1)^{k-1}(T_k-T_{k-1})+(t-T_{N(t)})V(0) (-1)^{N(t)},
	\end{equation}
	where $\{V(t),t\geq 0\}$ represents a Markov chain with state space $\{-c,c\}$ (i.e. the velocity-jump process) with $V(t):=V(0)(-1)^{N(t)}$ and $V(0)$ uniformly distributed in $\{-c,c\}.$ Furthermore, $\{N(t),t\geq 0\}$ is a homogenous Poisson process with rate $\lambda >0$ and $\{T_k,k\geq 0\}$ is the sequence of Poisson times $(T_0:=0)$. The couple $\{(X(t),V(t)), t\geq 0\}$ is a Markov process while $X$ alone is not markovian. Let $\hat f:=(f(x,c),f(x,-c))'$ be a pair locally bounded, measurable functions belonging to  $C^1.$
 The mean value $\mathds E_{x,v}[f(X(t),V(t)]$ is a strongly continuous semigroup with infinitesimal generator given by (see, e.g., \cite{wat} and \cite{pinsky})
	$$\mathfrak G \hat f=(\mathfrak A +Q) \hat f,$$
	where $\mathfrak A:= \left(  \begin{matrix} % or pmatrix or bmatrix or Bmatrix or ...
	      c\frac{\partial}{\partial x} & 0 \\
	      0 & - c\frac{\partial}{\partial x}  \\
	   \end{matrix}\right)$
	   and 
	   $Q:= \left(  \begin{matrix} % or pmatrix or bmatrix or Bmatrix or ...
	      -\lambda & \lambda \\
	      \lambda & - \lambda  \\
	   \end{matrix}\right)$
	   represents the scattering component of $\mathfrak G$ in terms of matrix infinitesimal generator of $\{V(t),t\geq 0\}$.

	%$$\mathfrak G f(x,v)=v f'(x,v) +\lambda[f(x,-v)+f(x,v)], \quad v\in \{-c,c\},$$
Another interesting feature of the telegraph process is the connection with the hyperbolic partial differential equations. It is well-known that unique classical solution of the following Cauchy problem, involving the telegraph equation, 
		
	\begin{equation}\label{eq:telpde}
	\begin{cases}
	\frac{\partial^2 u}{\partial t^2}(x,t)+2\lambda \frac{\partial u}{\partial t}(x,t)=c^2 \frac{\partial^2 u}{\partial x^2}(x,t),\quad x\in \mathbb R, t\geq 0,\\
	u(x,0)=f(x), \quad \frac{\partial u}{\partial t}(x,0)=0,\quad f\in C^2(\mathbb R),
	\end{cases}
	\end{equation}
	is given by
	\begin{align}\label{eq:solteleq}
	u(x,t)&=\mathds{E}f(x+X(t))\\
	&=\frac12 e^{-\lambda t}[f(x+ct)+f(x-ct)]+\int_{-ct}^{ct} \mu(y,t) f(x+y)\de y\notag
	%&\quad+\frac{\lambda e^{-\lambda t}}{2c} \int_{-ct}^{ct} I_0\left(\frac\lambda c\sqrt{c^2t^2-y^2}\right)f(x-y)\de y\notag\\
	%&\quad+\frac{\lambda t e^{-\lambda t}}{2} \int_{-ct}^{ct} \frac{ I_1\left(\frac\lambda c\sqrt{c^2t^2-y^2}\right)}{\sqrt{c^2t^2-y^2}} f(x-y)\de y\notag
	\end{align}
	where
	\begin{align}\label{eq:acpart}
	\mu(x,t):=\frac{\lambda e^{-\lambda t}}{2} \left[\frac1c I_0\left(\frac\lambda c\sqrt{c^2t^2-x^2}\right)+t \frac{ I_1\left(\frac\lambda c\sqrt{c^2t^2-x^2}\right)}{\sqrt{c^2t^2-x^2}}\right],
	\end{align}
	with $I_\nu(x):=\sum_{k=0}^\infty \frac{(z/2)^{\nu+2k}}{k!\Gamma(k+\nu +1)},\nu \in\mathbb C,$ representing the so-called modified Bessel function. Furthermore, the probability law of $X(t),$ for any $t\geq 0,$ is given by
	\begin{align}\label{eq:lawtel}
	p(\de x,t):= \mathds{P}(X(t)\in \de x)=\frac12 e^{-\lambda t}[\delta_{- ct}(\de x)+\delta_{ct}(\de x)]+\mu(x,t) \mathds{1}_{|x|<ct}\,\de x
	\end{align}
	where $\mu(x,t)\mathds{1}_{|x|<ct}\, \de x$ represents the absolute continuous component of the probability distribution of the telegraph process and $\delta_x(B)$ is the Dirac measure. An abstract version of the above problem is studied in \cite{griegoh}. Furthermore, we note that $p(\de x,t)$ admits a singular part for any $t>0$ since $\mathds{P}(X(t)=ct)=\mathds{P}(X(t)=-ct)=e^{-\lambda t}$ (i.e. if $N(t)=0$) and then we have that $X(t)\in [-ct,ct] ,$  $\mathds{P}-$a.e..  We observe that $p(\de x,t)$ can be viewed as a probabilit measure involving a density function in the space of generalized functions $\mathfrak D'(\mathbb R)$ (i.e. the dual space of all infinte differentiable function on $\mathbb R$ with compact support). Let $\xi \in\mathbb R,$ the characteristic function of $X(t), t\geq 0,$ becomes
	\begin{align}\label{eq:cftp}
	\mathds E e^{i \xi X(t)}&=\int_{\mathbb R}e^{i x\xi} p(x,t)\de x\\
	&= e^{-\lambda t}\left[\cos(t\sqrt{\xi^2c^2-\lambda^2})+\frac{\lambda }{\sqrt{\xi^2c^2-\lambda^2}}\sin(t\sqrt{\xi^2c^2-\lambda^2})\right]\notag\\\
	&= e^{-\lambda t}\left\{\left[\cosh(t\sqrt{\lambda^2-\xi^2c^2})+\frac{\lambda }{\sqrt{\lambda^2-\xi^2c^2}}\sinh(t\sqrt{\lambda^2-\xi^2c^2})\right]\mathds{1}_{|\xi|\leq \lambda/c}\right.\notag\\
	&\quad\left.+\left[\cos(t\sqrt{\xi^2c^2-\lambda^2})+\frac{\lambda }{\sqrt{\xi^2c^2-\lambda^2}}\sin(t\sqrt{\xi^2c^2-\lambda^2})\right]\mathds{1}_{|\xi|>\lambda/c}\right\}\notag.
	\end{align}
	Furthermore, the conditional density functions are given by (see \cite{dgos})
	\begin{equation}\label{eq:disttp}
	\mu_n(x,t):=\frac{\mathds P( X(t)\in \de x|N(t)=n)}{\de x}=\begin{cases}
	\frac{\Gamma(n+1)}{(\Gamma(\frac{n+1}{2}))^22^{n}ct}\left(1-\frac{x^2}{c^2t^2}\right)_+^\frac{n-1}{2},& n\, \text{odd},\\
	\frac{\Gamma(n+1)}{\Gamma(\frac n2+1)\Gamma(\frac n2)2^{n}ct}\left(1-\frac{x^2}{c^2t^2}\right)_+^{\frac n2-1},&  n\, \text{even}.
	\end{cases}
	\end{equation}

We also provide the following result regarding the conditional probability law of $(X(t),V(t)), t\geq 0.$
	\begin{theorem} 
		For any $t\geq 0,$ we have that for $v\in\{-c,c\}$
		\begin{align}
		\mathds P(X(t)\in \de x, V(t)=v|V(0)=v)= e^{-\lambda t}\left[\delta_{v t}(\de x)+\frac{\lambda}{2c}(ct + x\text{sgn}(v)) \frac{I_1(\frac\lambda c\sqrt{c^2t^2-x^2})}{\sqrt{c^2t^2-x^2}}\mathds 1_{|x|<ct}\de x\right]
		\end{align}
		and
		\begin{align}
	\mathds P(X(t)\in \de x, V(t)= v|V(0)= -v)= e^{-\lambda t}\frac{\lambda}{2c} I_0\left(\frac\lambda c\sqrt{c^2t^2-x^2}\right)\mathds 1_{|x|<ct}\de x.
		\end{align}
	\end{theorem}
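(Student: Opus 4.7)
The plan is to condition on $N(t)$ and exploit the identity $V(t)=V(0)(-1)^{N(t)}$. In the first formula $V(t)=V(0)=v$ forces $N(t)$ to be even, while in the second $V(t)=-V(0)$ forces $N(t)$ to be odd, so the joint law decomposes as
\begin{equation*}
\mathds P(X(t)\in\de x,\,V(t)\mid V(0))=\sum_{n\text{ of the right parity}}\mathds P(N(t)=n)\,\mathds P(X(t)\in\de x\mid N(t)=n,\,V(0)),
\end{equation*}
with Poisson weights $e^{-\lambda t}(\lambda t)^n/n!$. The atom $e^{-\lambda t}\delta_{vt}$ in the first formula is simply the $n=0$ contribution (no reversal, so $X(t)=V(0)t$), and the remaining series for $n\geq 1$ should collapse to Bessel-function expansions.

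To determine $\mathds P(X(t)\in\de x\mid N(t)=n,\,V(0)=v)$, recall that conditionally on $N(t)=n$ the sojourn times $(\tau_0,\dots,\tau_n)$ are uniformly distributed on the simplex $\{\tau_i\geq 0,\,\sum_i\tau_i=t\}$, i.e.\ they follow a Dirichlet$(1,\dots,1)$ law. Writing $T_+$ for the sum of the even-indexed $\tau_k$ (the total time spent with velocity $+v$), one has $X(t)=v(2T_+-t)$; by the aggregation property of Dirichlet distributions, $T_+/t\sim\mathrm{Beta}(m+1,m)$ when $n=2m\geq 2$ and $T_+/t\sim\mathrm{Beta}(m+1,m+1)$ when $n=2m+1$. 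A linear change of variables then gives, for $v=c$,
\begin{align*}
f_{X\mid N,V(0)}(x\mid 2m,c)&=\frac{(2m)!}{m!(m-1)!}\,\frac{(ct+x)\,[(ct)^2-x^2]^{m-1}}{(2ct)^{2m}}\,\mathds 1_{|x|<ct},\\
f_{X\mid N,V(0)}(x\mid 2m+1,c)&=\frac{(2m+1)!}{(m!)^2}\,\frac{[(ct)^2-x^2]^{m}}{(2ct)^{2m+1}}\,\mathds 1_{|x|<ct}.
\end{align*}
The case $v=-c$ is recovered by the reflection $x\mapsto -x$, which replaces $ct+x$ with $ct-x$; both cases are subsumed by the unified factor $ct+x\,\sgn(v)$, matching the statement.

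Multiplying by the Poisson weights and summing, the even-parity series simplifies, after reindexing $k=m-1$, to
\begin{equation*}
e^{-\lambda t}\,\frac{\lambda\,(ct+x\,\sgn(v))}{2c\sqrt{c^2t^2-x^2}}\,\sum_{k\geq 0}\frac{\bigl(\tfrac{\lambda}{2c}\sqrt{c^2t^2-x^2}\bigr)^{2k+1}}{k!\,(k+1)!},
\end{equation*}
which equals $e^{-\lambda t}\,\tfrac{\lambda(ct+x\,\sgn(v))}{2c\sqrt{c^2t^2-x^2}}\,I_1\!\bigl(\tfrac{\lambda}{c}\sqrt{c^2t^2-x^2}\bigr)$ by the series definition of $I_\nu$ recalled in the excerpt; the odd-parity series collapses analogously to $e^{-\lambda t}\,\tfrac{\lambda}{2c}\,I_0\!\bigl(\tfrac{\lambda}{c}\sqrt{c^2t^2-x^2}\bigr)$ through the identity $\sum_{m\geq 0}(z/2)^{2m}/(m!)^2=I_0(z)$. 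The main obstacle is purely algebraic bookkeeping: the asymmetry between the exponents $m$ and $m-1$ in the $\mathrm{Beta}(m+1,m)$ density is exactly what produces the factor $ct+x\,\sgn(v)$ in the even case, and one must carefully track the factorials, powers of $2ct$, and parity-dependent reindexing to recover the compact closed forms of the statement.
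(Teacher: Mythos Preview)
Your proposal is correct and follows essentially the same route as the paper: condition on $N(t)$, split by parity, weight the conditional densities by Poisson probabilities, and sum the resulting series into $I_0$ and $I_1$. The only difference is that the paper quotes the conditional densities $\mathds P(X(t)\in\de x\mid N(t)=n,\,V(0)=v)$ from \cite{dgos}, whereas you derive them directly via the Dirichlet--Beta aggregation argument; this is a self-contained elaboration of the same step rather than a different approach.
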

	\begin{proof}
		We recall the following result contained in \cite{dgos}
		\begin{equation}\label{eq: probcon1}
		\mathds P(X(t)\in \de x|N(t)=2k+2, V(0)=\pm c)=\de x\frac{(2k+2)!}{(2ct)^{2k+2}}\frac{(ct\pm x)^{k+1}(ct\mp x)^k}{k! (k+1)!}\mathds 1_{|x|<ct}.
		\end{equation}
		Therefore, by exploiting the result \eqref{eq: probcon1}, we can write down
		\begin{align*}
		&\mathds P(X(t)\in \de x, V(t)=\pm c|V(0)=\pm c)\\
		&=\mathds P(X(t)\in \de x, N(t)=0|V(0)=\pm c)+ \sum_{k=0}^\infty  \mathds P(X(t)\in \de x, N(t)=2k+2|V(0)=\pm c)\\
		&=e^{-\lambda t}\delta_{\mp ct}(\de x)+\sum_{k=0}^\infty \mathds P(N(t)=2k+2) \mathds P(X(t)\in \de x| N(t)=2k+2, V(0)=\pm c)\\
		&=e^{-\lambda t}\delta_{\mp ct}(\de x)+\sum_{k=0}^\infty e^{-\lambda t}\frac{(\lambda t)^{2k+2}}{(2k+2)!}\frac{(2k+2)!}{(2ct)^{2k+2}}\frac{(ct\pm x)^{k+1}(ct\mp x)^k}{k! (k+1)!}\mathds 1_{|x|<ct}\de x\\
		&=e^{-\lambda t}\delta_{\mp ct}(\de x)+e^{-\lambda t} \frac{\lambda}{2c}\frac{(ct \pm x)}{\sqrt{c^2t^2-x^2}}\sum_{k=0}^\infty \left( \frac{\lambda}{2c}\right)^{2k+1}\frac{1}{k!(k+1)!}(\sqrt{c^2t^2-x^2})^{2k+1}\mathds 1_{|x|<ct}\de x\\
		&=e^{-\lambda t}\left[\delta_{\mp ct}(\de x)+\frac{\lambda}{2c}(ct \pm x) \frac{I_1(\frac\lambda c\sqrt{c^2t^2-x^2})}{\sqrt{c^2t^2-x^2}}\mathds 1_{|x|<ct}\de x\right].
		\end{align*}
		Analogously, we resort the following result proved in \cite{dgos}
		\begin{equation}\label{eq: probcon2}
		\mathds P(X(t)\in \de x|N(t)=2k+1, V(0)=\mp c)=\de x\frac{(2k+1)!}{(2ct)^{2k+1}}\frac{(c^2t^2-x^2)^k}{(k!)^2}\mathds 1_{|x|<ct}.
		\end{equation}
		Therefore, by exploiting the result \eqref{eq: probcon2}, we get
		\begin{align*}
	\mathds P(X(t)\in \de x, V(t)=\pm c|V(0)=\mp c)
		&=\sum_{k=0}^\infty  \mathds P(X(t)\in \de x, N(t)=2k+1|V(0)=\mp c)\\
		&=\sum_{k=0}^\infty \mathds P(N(t)=2k+1)\mathds P(X(t)\in \de x| N(t)=2k+1, V(0)=\mp c)\\
		&=\sum_{k=0}^\infty e^{-\lambda t}\frac{(\lambda t)^{2k+1}}{(2k+1)!}\frac{(2k+1)!}{(2ct)^{2k+1}}\frac{(c^2t^2-x^2)^k}{(k!)^2}\mathds 1_{|x|<ct}\de x\\
		&=e^{-\lambda t} \frac{\lambda}{2c}\sum_{k=0}^\infty \left( \frac{\lambda}{2c}\right)^{2k}\frac{1}{(k!)^2}(\sqrt{c^2t^2-x^2})^{2k}\mathds 1_{|x|<ct}\de x\\
		&=e^{-\lambda t}\frac{\lambda}{2c} I_0\left(\frac\lambda c\sqrt{c^2t^2-x^2}\right)\mathds 1_{|x|<ct}\de x,
		\end{align*}
		which concludes the proof.
	\end{proof}

	\section{The wrapped up telegraph process}

	We introduce the wrapped up telegraph process on a circle; i.e. a circular motion with random angular velocity. Let us consider a particle performing random displacements on a circle $\mathbb S_r:=\{z\in \mathbb C:|z|=r\},  0<r\leq1,$ (i.e. arcs of circumference) with constant angular speed $c>0$ and starting from a point on $\mathbb S$ denoted by  $z:=re^{i\theta},\theta\in[0,2\pi).$  The particle initially chooses, with the same probability, to start its motion with a rotation $ze^{ict}$ or $ze^{-ict}$ till the first time $t$ where the particle reverses its motion. A homogeneous Poisson process with $\lambda>0,$ governs the changes of rotation; i.e. at each Poisson event the particle changes its rotation and moves in the opposite way till a new Poisson time.
	
	More precisely, let $Z(t)$ be the position of the particle on $\mathbb S_r$ at time $t\geq 0,$ with $Z(0)=z.$ Let $\{V(t),t\geq 0\}$ be the jump process introduced for the standard telegraph process in the previous section.  The particle initially moves choosing with probability 1/2 a rotation given by $ze^{iv}, v\in\{-c,c\}$; i.e. $V(0)=v.$  At time $0\leq t<T_1$ the random motion reaches the position 
	\begin{align*}
	&Z(t)=ze^{i v t},\\
	&V(t)=V(0)=v.
	\end{align*}
	In $T_1$ a Poisson event occurs and then the motion invert its rotation, that is $V(T_1)=-v,$ and performs a displacement on the arc of the circle always with constant angular speed $c$  till a new Poisson event; i.e. for $T_1\leq t<T_2$ the position of the particle on $\mathbb S_r $  is
	\begin{align*}&Z(t)=ze^{iv T_1} e^{-i v (t- T_1)},\\
	&V(t)=V(T_1)=-v.
	\end{align*}
	The particle continues to move following the same random scheme. We observe that in the circular telegraph process the translations $x\pm ct$ are replaced with the rotations $ze^{\pm ict}$.
	Therefore, bearing in mind \eqref{eq:tel}, we define $Z:=\{Z(t),t\geq 0\}$ as the telegraph process on $\mathbb S_r$ starting from $Z(0)=z$ where 
	\begin{equation}\label{eq:circtel}
	Z(t):=ze^{i X(t)}.
	\end{equation}
	Therefore the  standard telegraph process $\{X(t),t\geq 0\}$ appearing \eqref{eq:circtel} represents the random angle subtended to the arc descried by $Z$ while $\{V(t),t\geq 0\}$ denotes its  random angular velocity leading two modes of the particle evolution. 
	The imaginary exponential function $\mathbb R \ni x\mapsto e^{ix}$ is a (surjective) morphism of topological groups from the real line $\mathbb R$ to the unit circle $\mathbb{S}$. Therefore the sample paths of $\{Z(t),t\geq 0\}$ are obtained by wrapping up around $\mathbb S_r$ the trajectories of the standard telegraph process $\{X(t),t\geq 0\}$ by means of the imaginary exponential function. 
	A sample path of $ X(t) $ and the corresponding wrapped path $ Z(t) $ are shown in \autoref{fig:circle-path}.

	Equivalently, we can define \eqref{eq:circtel} as the solution of the following stochastic differential equation
	$$\frac{ \de Z(t)}{\de t}=\mathfrak V(t), \quad Z(0)=z, V(0)=v,$$
	with solution
	$$Z(t)=\mathfrak V(0)+\int_0^t \mathfrak V(s)\de s,$$
	where $\{\mathfrak V(t),t\geq 0\},$  $\mathfrak V(t):= i V(t) Z(t),$  denotes the velocity process of $Z.$ We observe that  $A(t):=\frac{\de \mathfrak V(t)(t)}{\de t}=\frac{\de^2 Z(t)}{\de t^2}=-V^2(t) Z(t)=-c^2 Z(t), t\geq 0,$ $\mathds P$-a.s., is the random centripetal acceleration of $Z$, and then
	$$\mathfrak V(t)=\mathfrak V(0)+\int_0^t A(s)\de s.$$

		The vectors $ Z(t) $ and $ \mathfrak V(t) $ corresponding to a piece of the sample path of the telegraph process are depicted in \autoref{fig:circle-vecs}.

	\begin{figure}[h]
		\centering
		\begin{overpic}[width=.5\linewidth, tics=10]{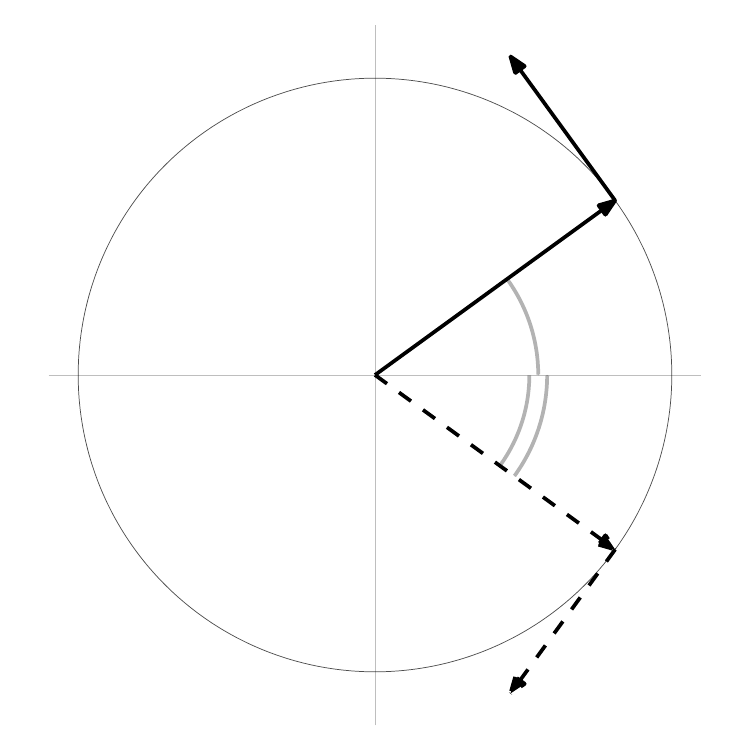}
			\put (50,70) {\small $Z(t)=e^{i  X(t)}$}
			\put (45,27) {\small $Z(t)=e^{-i X(t)}$}
			\put (72,92) {\small $\mathfrak V(t) = ic Z(t)$}
			\put (72,55) {\footnotesize $X(t)=ct$}
			\put (73,41) {\footnotesize $X(t)=-ct$}
			\put (73,6) {\small $\mathfrak V(t) =- ic Z(t)$}
		\end{overpic}	
		\caption{Plot of the vector components of the circular telegraph process for $ t < T_1 $ in the case $ V(0)=+c$ (solid line) or $ V(0)=+c$ (dashed line). Here the telegraph process $ X(t) $ represents the angle spanned by the trajectory.}
		\label{fig:circle-vecs}
	\end{figure}

	Since we are describing a uniform circular random motion, as expected, the intensity of $\mathfrak V(t)$ and $A(t)$ are given by $|\mathfrak V(t)|=|V(t)|=c$ and $|A(t)|=c^2.$
	\begin{remark}
		Alternatively, we can introduce the circular telegraph process by means of polar coordinates in $\mathbb R^2;$ i.e.
		\begin{equation}
		Z(t):=  \left(  \begin{matrix}
		Z_1(t)\\
		Z_2(t)
		\end{matrix} \right):=\left(  \begin{matrix} % or pmatrix or bmatrix or Bmatrix or ...
		r\cos(\theta+X(t)) \\
		r\sin(\theta+X(t))  \\
		\end{matrix}\right),
		\end{equation}
		and
		\begin{equation}
		\mathfrak V(t):=\left(  \begin{matrix}
		\mathfrak V_1(t)\\
		\mathfrak V_2(t)
		\end{matrix} \right)= \left(  \begin{matrix} % or pmatrix or bmatrix or Bmatrix or ...
		-r V(t)\sin(\theta+X(t)) \\
		rV(t)\cos(\theta+X(t))  \\
		\end{matrix}\right),
		\end{equation}
		where $\theta$ is the initial angle. The random coordinates $Z_1(t)$ and $Z_1(t)$ turn out to be the harmonic motions related to $Z$  (in \autoref{fig:circle-path} are displayed two possible sample paths of these processes). This issue will be discussed in Section 6.
	\end{remark}

In what follows, we are going to present some results concerning the properties of the family of the semigroup operators associated to the wrapped up telegraph process. Let $(Z(0),V(0))=(z,v)$ where $z:=re^{i\theta}, \theta\in[0,2\pi), 0<r\leq 1,$ and $v\in\{-c,c\}.$ Therefore $\{(Z(t),V(t)), t\geq 0\}$ represents an homogenous strong Markov process (see, e.g., Theorem (25.5) in \cite{dav}) on $(\Omega, \mathcal F, (\mathcal F_t)_{t\geq 0})$ with state space $\mathbb{S}_r\times\{-c,c\}$. Let $\es_{z,v}[\cdot]$ the mean value with respect to the probability law of $\{(Z(t),V(t)), t\geq 0\}$ with starting point $(z,v)\in\mathbb G := \mathbb D  \times\{-c,c\}$, where 
and $\mathbb D=\{z\in\mathbb C:|z|<1\}$ denotes the unit disc of the complex plane. We assume that if $ z=0 $
then $ Z(t) = 0 $ for all $ t>0$.

	Let $\mathbb E=\mathbb R,\mathbb C;$  $B (\mathbb G,\mathbb E)$ stands for  the space of  $\mathbb E$-valued bounded, measurable functions whose domain is $\mathbb G$.
	Let $$A(\mathbb D):=\{f:\overline{\mathbb D}\to \mathbb C; f \,\text{analytic in}\, \mathbb D ,\, \text{continuous on}\, \overline{\mathbb D} \}$$ endowed with the uniform norm $||f||=\sup_{z\in\overline{\mathbb D}  }|f(z)|$. $(A(\mathbb D), ||\cdot||)$ represents a Banach space.

	\begin{figure}
		\centering
		\begin{subfigure}{.6\textwidth}
			\begin{overpic}[width=\linewidth,tics=10, 
				trim=0 100 0 100, clip]{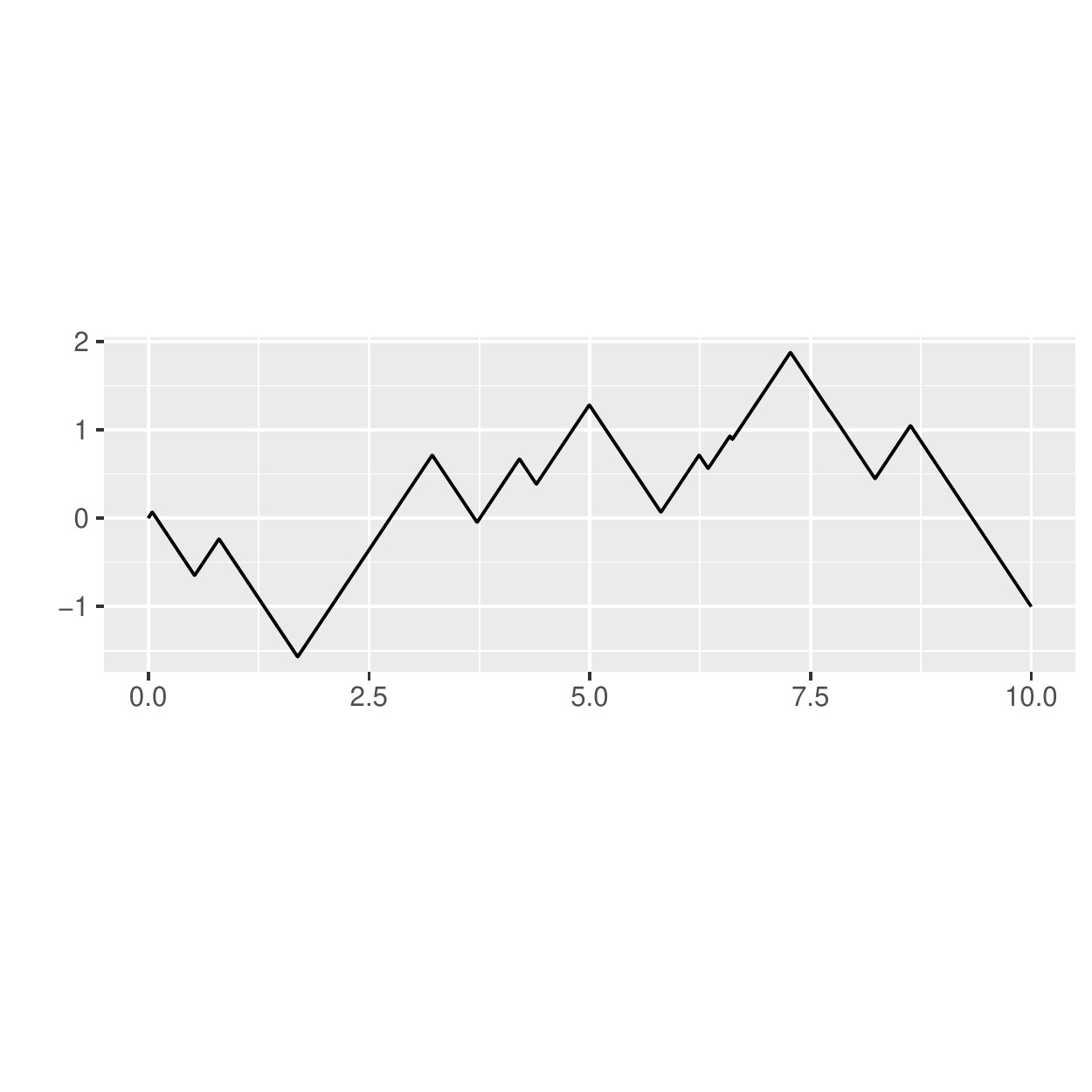}
				\put (53,3) {\footnotesize $t$}
				\put (-2,25) {\footnotesize $X(t)$}
			\end{overpic}
		\end{subfigure}

		\begin{subfigure}{.5\textwidth}
			\begin{overpic}[width=\linewidth,tics=10]{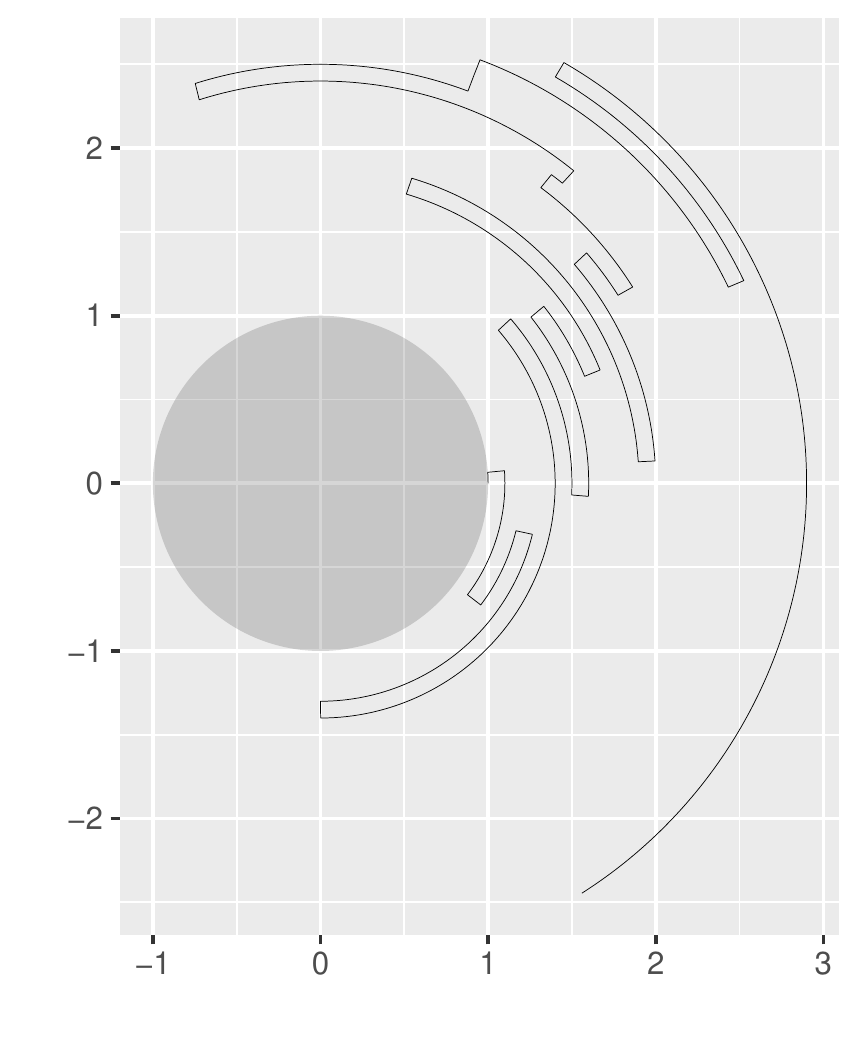}
				\put (29,53) {\footnotesize$\mathbb S_1$}
				\put (43,2) {\footnotesize$\text{Re}(z)$}
				\put (82,53) {\footnotesize$\text{Im}(z)$}
				\put (70,25) {\footnotesize$Z(t)$}
			\end{overpic}
		\end{subfigure}
		
			\begin{subfigure}{.5\textwidth}
			\begin{overpic}[width=\linewidth,tics=10]{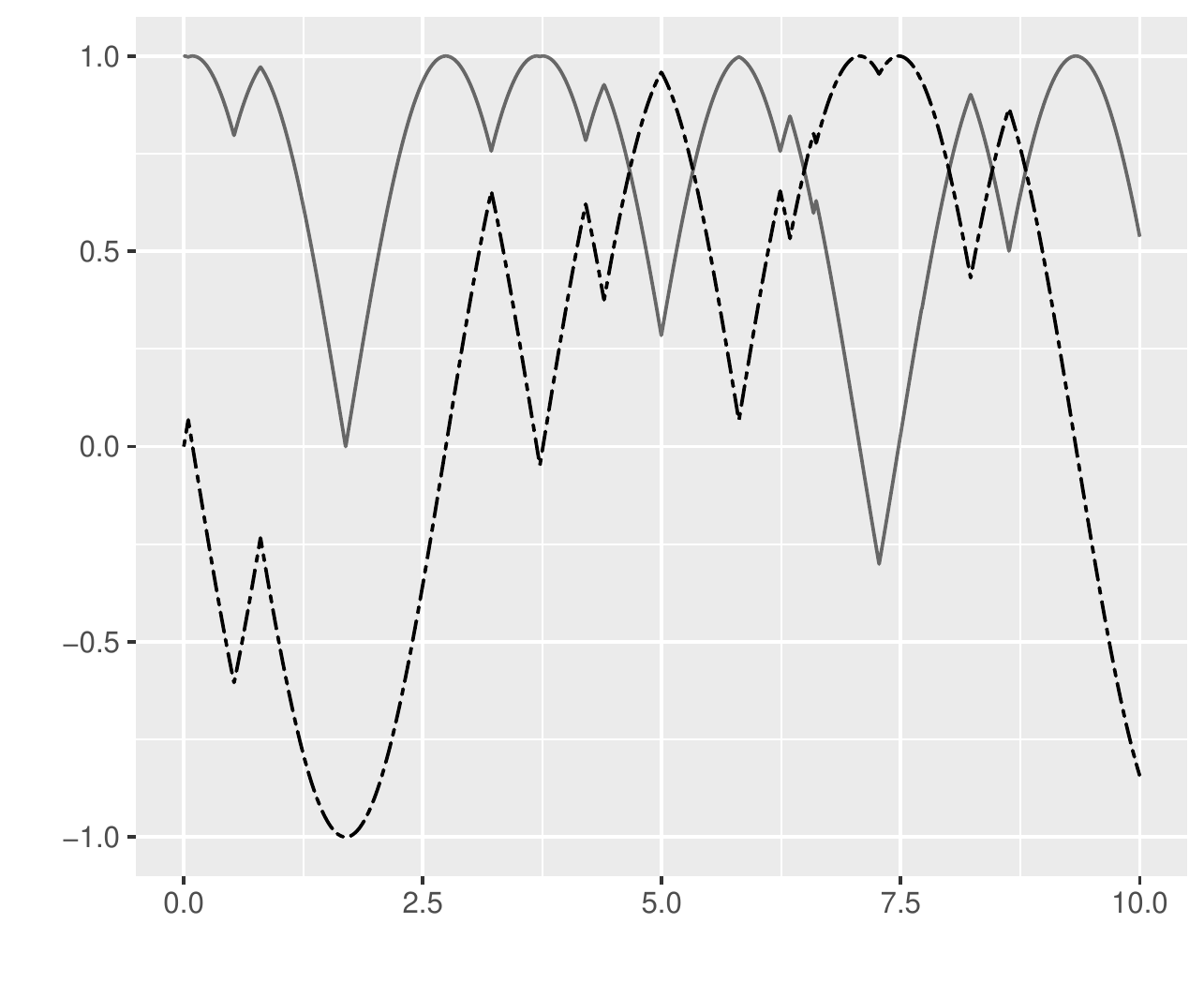}
				\put (54,1) {\footnotesize $t$}
				\put (14,63) {\footnotesize $Z_1(t)$}
				\put (14,17) {\footnotesize $Z_2(t)$}
			\end{overpic} 
		\end{subfigure}
		\caption{A sample path of the telegraph process $X(t)$ (top) and the corresponding wrapped path of $Z(t)$ (center), artificially displayed with increasing radii in order to visually distinguish between overlapping arc pieces. (Each vertical jump depicts a Poisson event, and not an actual jump of the sample path: formally, the plot of $(1+\epsilon N(t)) Z(t)$ is shown, where $ \epsilon >0 $ is the height of the jump). The plot on the bottom shows the sample paths of the associated harmonic components.}
		\label{fig:circle-path}
	\end{figure}
	
	\begin{theorem} Let $f\in B(\mathbb G, \mathbb C)$ and  the family $\{T_t f, t\geq 0\}$ of bounded linear operators defined as follows
		$$T_t f(z,v):=\es_{z,v}[f(Z(t),V(t))],\quad (z,v)\in \mathbb G. $$
		We have that:
		\begin{enumerate}[(i)]
			\item $T_t f\in B(\mathbb G,\mathbb C);$
			%$$f(z,t):=\es_{z,c}[f(Z(t),V(t))],\quad b(z,t):=\es_{z,-c}[f(Z(t),V(t))].$$ 
			
			\item  if $z\mapsto f(z, v)\in A(\mathbb D)$ then $z\mapsto T_t f(z,v)\in A(\mathbb D)$ and 
			
			\begin{align}\label{eq:teoolomof}
			&T_t f(z,v)=e^{-\lambda t}\sum_{k=0}^\infty z^k[d_k(t,v)+e_k(t,v)], \quad z\in \mathbb D,
			\end{align}
			where
			$$d_k(t,v):=a_k(v)\left[e^{ikvt}+\cos(t\sqrt{c^2k^2-\lambda^2}) +i \text{\sgn}(v)\frac{ck \sin(t\sqrt{c^2k^2-\lambda^2})}{\sqrt{c^2k^2-\lambda^2}}-(1+\text{\sgn}(v))\cos(kct)\right]$$
			and
			$$e_k(t,v):=a_k(-v)\lambda\frac{ \sin(t\sqrt{c^2k^2-\lambda^2})}{\sqrt{c^2k^2-\lambda^2}}.$$
			
			\item  $u(z,v,t):= T_t f(z,v), v\in\{-c,c\},$ represent the solution of the following system of integral equations
			\begin{align}\label{eq:inteq}
			u(z,v,t)&=e^{-\lambda t} f(ze^{ivt},v)+\lambda \int_0^t  e^{-\lambda s} u(ze^{ivs},-v,t-s)\de s,\quad v\in\{-c,c\}.
			\end{align}
			Equivalently, 
			$u(z,v,t)$ satisfies the following Kolmogorov backward equation
			\begin{align}\label{eq:ke}
			&\frac{\partial u }{\partial t}(z,v,t)  =\mathfrak  L u(z,v,t), \quad z=re^{i\theta},0<r<1, \theta\in[0,2\pi), v\in\{-c,c\},\\
			&u(0,z,v)=f(z,v),\quad z\in \overline{\mathbb D}, \notag% =\left(\begin{matrix}
			\end{align}
			where  $\mathfrak L$ is the infinitesimal generator of $\{(Z(t),V(t)), t\geq 0\},$ given by 
			$$\mathfrak Lf(z,v)=izv f'(z,v)+\lambda[f(z,-v)-f(z,v)]$$
			where $f'(z,v)=\frac{\partial f(x,v)}{\partial x}\Big|_{x=z}$ and Dom$(\mathfrak L)=\{f\in B(\mathbb G,\mathbb C):z\mapsto f(z, v)\in A(\mathbb D)\}.$
			
			\item Let $R_{\mu}(\mathfrak L) f  = (\mu - \mathfrak L)^{-1}$ be  the resolvent of $ \mathfrak L $ and $ \rho(\mathfrak L) $ its resolvent set. If $  z\mapsto f(z, v)\in A(\mathbb D) $ and $\mu>0,$ then $z \mapsto R_{\mu}(\mathfrak L) f(z,v) \in A(\mathbb D)$ and the resolvent takes the form 
			\begin{align}
			R_{\mu}(\mathfrak L) f (z,v) = 
			\sum_{k=0}^\infty z^k[\tilde{d}_k(\mu,v) + \tilde{e}_k(\mu,v)],\quad z\in \mathbb D,
			\end{align}
			where
			\begin{align}
			\tilde{d}_k(\mu,v) = 	a_k(v) \bigg( - \sgn(v) \frac{1}{\lambda + \mu + i k c}  + \frac{\lambda + \mu + i k v }{\mu^2 + 2 \lambda \mu + c^2k^2}\bigg) 
			\end{align}
			and
			\begin{align}
			\tilde e_k(\mu, v) = 
			\frac{\lambda}{\mu^2 + 2 \lambda \mu + c^2k^2} a_k(-v)
			\end{align}
			
		\end{enumerate}
	\end{theorem}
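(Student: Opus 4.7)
Part (i) is immediate since $|T_tf(z,v)|\le\|f\|_\infty$ and $(z,v)\mapsto T_tf(z,v)$ is measurable by the strong Markov property. The real content of the theorem is part (ii); parts (iii) and (iv) then follow from standard semigroup and Laplace-transform arguments.

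\textbf{For part (ii)}, I would expand $f$ in a Taylor series $f(z,v)=\sum_{k=0}^\infty a_k(v)z^k$, which converges uniformly on compact subsets of $\mathbb D$, with $|a_k(v)|\le\|f\|$ by Cauchy's estimates. Since $Z(t)=ze^{iX(t)}$,
\[
T_tf(z,v)=\es_{z,v}[f(Z(t),V(t))]=\sum_{k=0}^\infty z^k\,\es_{z,v}[a_k(V(t))\,e^{ikX(t)}],
\]
the interchange of sum and expectation being justified by dominated convergence whenever $|z|<1$. Analyticity in $z$ on $\mathbb D$ is then immediate, and continuity on $\overline{\mathbb D}$ follows from the uniform bound. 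To obtain the explicit coefficients I would split
\[
\es_{z,v}[a_k(V(t))e^{ikX(t)}]=a_k(v)\es_{z,v}[e^{ikX(t)}\mathds 1_{V(t)=v}]+a_k(-v)\es_{z,v}[e^{ikX(t)}\mathds 1_{V(t)=-v}],
\]
and compute each piece by integrating $e^{ikx}$ against the joint laws provided by Theorem~1. The Dirac mass $\delta_{vt}$ supplies the $e^{ikvt}$ contribution; the absolutely continuous parts reduce to the Bessel integrals
\[
\int_{-ct}^{ct}e^{ikx}I_0\!\left(\tfrac{\lambda}{c}\sqrt{c^2t^2-x^2}\right)dx \quad\text{and}\quad \int_{-ct}^{ct}e^{ikx}(ct+x\,\sgn(v))\frac{I_1\!\left(\tfrac{\lambda}{c}\sqrt{c^2t^2-x^2}\right)}{\sqrt{c^2t^2-x^2}}dx.
\]
The $I_0$ integral is a classical Sonine-type transform and equals $2c\sin(t\sqrt{c^2k^2-\lambda^2})/\sqrt{c^2k^2-\lambda^2}$, which immediately produces $e_k(t,v)$. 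The $I_1$ integral I would split into its even part (the $ct$ factor), computed by matching $\int e^{ikx}\mu(x,t)\,dx$ against the known telegraph characteristic function (2.6), and its odd part (the $\sgn(v)x$ factor), obtained from the even part by differentiation in $k$. Collecting the resulting trigonometric contributions produces $d_k(t,v)$.

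\textbf{For parts (iii) and (iv)}, the integral equation (3.11) follows by conditioning on the first Poisson time $T_1\sim\mathrm{Exp}(\lambda)$: on $\{T_1>t\}$ we have $Z(t)=ze^{ivt}$, $V(t)=v$, and on $\{T_1=s\le t\}$ the strong Markov property restarts the process at $(ze^{ivs},-v)$ with remaining time $t-s$. Differentiating in $t$ at $t=0$ yields $\mathfrak Lf(z,v)=izvf'(z,v)+\lambda[f(z,-v)-f(z,v)]$, the domain statement being justified by the $A(\mathbb D)$-invariance of $T_t$ established in part (ii); the backward Kolmogorov equation is then the standard consequence $\partial_tT_tf=\mathfrak L T_tf$. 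For the resolvent, I would use the representation $R_\mu(\mathfrak L)f(z,v)=\int_0^\infty e^{-\mu t}T_tf(z,v)\,dt$, valid for $\mu>0$, and integrate the series (3.10) termwise. The elementary Laplace transforms of $e^{-\lambda t}e^{\pm ikct}$, $e^{-\lambda t}\cos(t\sqrt{c^2k^2-\lambda^2})$ and $e^{-\lambda t}\sin(t\sqrt{c^2k^2-\lambda^2})/\sqrt{c^2k^2-\lambda^2}$ produce the denominators $\lambda+\mu+ikc$ and $\mu^2+2\lambda\mu+c^2k^2$ appearing in $\tilde d_k(\mu,v)$ and $\tilde e_k(\mu,v)$.

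\textbf{Main obstacle.} The principal challenge is the explicit evaluation of the $I_1$ integral in part (ii) and matching the result cleanly with the specific form of $d_k(t,v)$: the sign-dependent factor $\sgn(v)x$ in the integrand, together with the asymmetric term $(1+\sgn(v))\cos(kct)$, forces a careful bookkeeping of the even and odd components alongside the $e^{ikvt}$ delta contribution and the $V(t)=-v$ cross term, all of which must combine to give exactly the stated expression.
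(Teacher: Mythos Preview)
Your proposal is correct and follows essentially the same route as the paper: part~(i) via boundedness/measurability, part~(ii) via power-series expansion of $f$ combined with the joint law of $(X(t),V(t))$ from Theorem~1 and explicit Bessel--Fourier integrals, part~(iii) via conditioning on the first jump time $T_1$ and a small-$t$ expansion, and part~(iv) via termwise Laplace transform of the series.

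The only genuine methodological difference lies in the evaluation of the $I_1$ integrals in part~(ii). The paper rewrites both the even and the odd pieces using the identity $\lambda t\,I_1(\tfrac{\lambda}{c}\sqrt{c^2t^2-x^2})/\sqrt{c^2t^2-x^2}=\tfrac{1}{c}\,\partial_t I_0(\tfrac{\lambda}{c}\sqrt{c^2t^2-x^2})$, then handles the even part by differentiating the already-computed $I_0$ transform in $t$, and the odd $x$-weighted part by a separate Gradshteyn--Ryzhik formula involving $J_{3/2}$. Your alternative---recovering the even $I_1$ integral from the known telegraph characteristic function (since $\mu(x,t)$ is a linear combination of the $I_0$ and $I_1$ kernels) and obtaining the odd integral as $-i\,\partial_k$ of the even one---is equally valid and arguably a bit more economical, avoiding the special-function table lookup. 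Either way the bookkeeping you flag as the main obstacle (the $\mathrm{sgn}(v)$ and $(1+\mathrm{sgn}(v))\cos(kct)$ terms) is unavoidable and is handled identically.
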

	
	\begin{proof}
		(i)  It is sufficient to observe that
		$$T_t f(z,v)=\es_{z,v}[f(Z(t),V(t))]=\es_{z,v}[f_1(Z(t),V(t))]+i\es_{z,v}[f_2(Z(t),V(t))],$$
		where $f_1:=$Re$f\in B(\mathbb G, \mathbb R)$ and $f_2:=$ Im$f\in B(\mathbb G,\mathbb R).$ Therefore, by applying Lemma 19.3 in \cite{bass_2011}, we obtain $\es_{z,v}[f_1(Z(t),V(t))]$ and $\es_{z,v}[f_2(Z(t),V(t))]$
		belong to $B(\mathbb G, \mathbb R)$ which implies  $T_t f\in B(\mathbb G, \mathbb C).$
		
		(ii)  We recall the following results on the cosine and sine Fourier transforms of Bessel functions 
		\begin{equation}\label{eq:olomform1}
		\int_0^\infty \cos(\xi x)J_0(b\sqrt{a^2-x^2})\mathds 1_{0<x<a}\de x=\frac{\sin(a\sqrt{b^2+\xi^2})}{\sqrt{b^2+\xi^2}},\quad b\in\mathbb C,
		\end{equation}
		and
		\begin{equation}\label{eq:olomform2}
		\int_0^\infty \sin(\xi x) x (a^2-x^2)^{\nu/2}J_\nu(b\sqrt{a^2-x^2})\mathds 1_{0<x<a}\de x=2^{-1/2}\pi^{1/2} b^\nu a^{\nu+\frac32} \xi \frac{J_{\nu+\frac32}(a\sqrt{b^2+\xi^2})}{(b^2+\xi^2)^{\frac\nu 2+\frac34}} ,\quad b\in\mathbb C,
		\end{equation}
		see formula 6.738, at page 742 in \cite{GR}.
		
		We can write
		\begin{align}\label{eq:olom}
		&\es_{z,v}[f(Z(t),V(t))]=\es_v[f(ze^{iX(t)},V(t))]\\
		&=\int_{\mathbb R}f(ze^{ix},v)\mathds P(X(t)\in \de x, V(t)=v|V(0)=v)\notag\\
		&\quad+\int_{\mathbb R}f(ze^{ix},-v)\mathds P(X(t)\in \de x, V(t)=-v|V(0)=v).\notag\\
		&=\int_{\mathbb R}f(ze^{ix},v)\left[e^{-\lambda t}\delta(x-  vt) \de x+\frac{\lambda}{2c}(ct + x\,\text{sgn}(v)) \frac{I_1(\frac\lambda c\sqrt{c^2t^2-x^2})}{\sqrt{c^2t^2-x^2}}\mathds 1_{|x|<ct}\right]\de x\notag\\
		&\quad+\int_{\mathbb R}f(ze^{ix},-v)e^{-\lambda t}\frac{\lambda}{2c}I_0\left(\frac\lambda c\sqrt{c^2t^2-x^2}\right)\mathds 1_{|x|<ct}\de x.\notag
		\end{align}
		For $z\in\mathbb D,$ since $z\mapsto f(z,\pm v)$ is analytic in $\mathbb D,$ we have that $|f(ze^{ix},\pm v)|\leq \sum_{k=0}^\infty |a_k(\pm v) z^k| <\infty,$ which implies that $\sum_{k=0}^\infty a_k(\pm v) z^k e^{ikx}$ is uniformly convergent for any $x\in\mathbb R.$ Therefore, we are able to interchange the integral with the terms of the series as follows
		\begin{align}\label{eq:olombis}
		&\es_v[f(ze^{iX(t)},V(t))]\\
		&=\sum_{k=0}^\infty z^k a_k(v)e^{-\lambda t}\left[\int_{\mathbb R}e^{ikx} \delta(x-  vt) \de x+\frac{\lambda}{2c}\int_{\mathbb R}e^{ikx}(ct + x\,\text{sgn}(v)) \frac{I_1(\frac\lambda c\sqrt{c^2t^2-x^2})}{\sqrt{c^2t^2-x^2}}\mathds 1_{|x|<ct}\de x\right]\notag\\
		&\quad+\sum_{k=0}^\infty z^k a_k(-v)e^{-\lambda t}\frac{\lambda}{2c} \int_{\mathbb R}e^{ikx }I_0\left(\frac\lambda c\sqrt{c^2t^2-x^2}\right)\mathds 1_{|x|<ct}\de x\notag\\
		&=e^{-\lambda t}\sum_{k=0}^\infty z^k a_k(v) e^{ikvt} + \frac{\lambda t}{2} e^{-\lambda t} \sum_{k=0}^\infty z^k a_k(v)\int_{\mathbb R}e^{ikx } \frac{I_1(\frac\lambda c\sqrt{c^2t^2-x^2})}{\sqrt{c^2t^2-x^2}}\mathds 1_{|x|<ct}\de x\notag \\
		&\quad+\frac{\lambda}{2c}e^{-\lambda t}\text{sgn}(v)\sum_{k=0}^\infty z^k a_k(v)\int_{\mathbb R}xe^{ikx } \frac{I_1(\frac\lambda c\sqrt{c^2t^2-x^2})}{\sqrt{c^2t^2-x^2}}\mathds 1_{|x|<ct}\de x\notag\\
		&\quad+e^{-\lambda t}\frac{\lambda}{2c} \sum_{k=0}^\infty z^k a_k(-v)\int_{\mathbb R}e^{ikx }I_0\left(\frac\lambda c\sqrt{c^2t^2-x^2}\right)\mathds 1_{|x|<ct}\de x\notag\\
		&=e^{-\lambda t}\sum_{k=0}^\infty z^k a_k(v)e^{ikvt} +\frac{e^{-\lambda t}}{2c}\sum_{k=0}^\infty z^k a_k(v)\int_{\mathbb R}e^{ikx }\frac{\partial}{\partial t} I_0\left(\frac\lambda c\sqrt{c^2t^2-x^2}	\right)\mathds 1_{|x|<ct}\de x\notag\\
		&\quad+\frac{e^{-\lambda t}}{2c^2 t}\text{sgn}(v)\sum_{k=0}^\infty z^k a_k(v)\int_{\mathbb R}xe^{ikx } \frac{\partial}{\partial t} I_0\left(\frac\lambda c\sqrt{c^2t^2-x^2}	\right)\mathds 1_{|x|<ct}\de x\notag\\
		&\quad+e^{-\lambda t}\frac{\lambda}{2c} \sum_{k=0}^\infty z^k a_k(-v)\int_{\mathbb R}e^{ikx }I_0\left(\frac\lambda c\sqrt{c^2t^2-x^2}\right)\mathds 1_{|x|<ct}\de x.\notag
		\end{align}
		
		Now, by exploiting  \eqref{eq:olomform1} and \eqref{eq:olomform2}, we obtain the following results
		\begin{align}\label{eq:Int1}
		\int_{\mathbb R}e^{i k x} I_0\left(\frac\lambda c\sqrt{c^2t^2-x^2}\right)\mathds 1_{|x|<ct}\de x&=2\int_{0}^\infty \cos( k x) I_0\left(\frac\lambda c\sqrt{c^2t^2-x^2}\right)\mathds 1_{0<x<ct}\de x\\
		&=2\int_{0}^\infty \cos( kx) J_0\left(i\frac\lambda c\sqrt{c^2t^2-x^2}\right)\mathds 1_{0<x<ct}\de x\notag\\
		&=\frac{2c}{\sqrt{c^2k^2-\lambda^2}}\sin(t\sqrt{c^2k^2-\lambda^2}),\notag
		\end{align}

		\begin{align}\label{eq:Int2}
		\int_{\mathbb R}e^{ikx }\frac{\partial}{\partial t} I_0\left(\frac\lambda c\sqrt{c^2t^2-x^2}	\right)\mathds 1_{|x|<ct}\de x&=\int_{-ct}^{ct} e^{ikx }\frac{\partial}{\partial t}  I_0\left(\frac\lambda c\sqrt{c^2t^2-x^2}\right)\de x\\
		&= \frac{\partial}{\partial t}  \int_{-ct}^{ct} e^{ikx }I_0\left(\frac\lambda c\sqrt{c^2t^2-x^2}\right)\de x -2c\cos(kct)\notag\\
		&= \frac{\partial}{\partial t}  \int_{\mathbb R}e^{i xk} I_0\left(\frac\lambda c\sqrt{c^2t^2-x^2}\right)\mathds 1_{|x|<ct}\de x-2c\cos(kct)\notag\\
		&=2c[\cos(t\sqrt{c^2k^2-\lambda^2})- \cos(kct)],\notag
		\end{align}
		and
		\begin{align}\label{eq:Int3}
		&\int_{\mathbb R}xe^{i xk}\frac{\partial}{\partial t}  I_0\left(\frac\lambda c\sqrt{c^2t^2-x^2}\right)\mathds 1_{|x|<ct}\de x\\
		&=\int_{-ct}^{ct} xe^{ikx }\frac{\partial}{\partial t}  I_0\left(\frac\lambda c\sqrt{c^2t^2-x^2}\right)\de x\notag\\
		&= \frac{\partial}{\partial t}  \int_{-ct}^{ct} x e^{ikx }I_0\left(\frac\lambda c\sqrt{c^2t^2-x^2}\right)\de x -2c^2 t\cos(kct)\notag\\
		&=2i \frac{\partial}{\partial t}  \int_{0}^{\infty} x \sin (kx ) I_0\left(\frac\lambda c\sqrt{c^2t^2-x^2}\right)\mathds 1_{0<x<ct}\de x -2c^2 t\cos(kct)\notag\\
		&=2i \frac{\partial}{\partial t}  \int_{0}^{\infty} x \sin (kx ) J_0\left(i\frac\lambda c\sqrt{c^2t^2-x^2}\right)\mathds 1_{0<x<ct}\de x -2c^2 t\cos(kct)\notag\\
		&=\sqrt{2\pi} i \frac{c^3k}{(c^2k^2-\lambda^2)^{\frac34}}\frac{\partial}{\partial t} \left\{t^{\frac 32}J_{\frac32}(t\sqrt{c^2k^2-\lambda^2})\right\}-2c^2 t\cos(kct)\notag\\
		&=\sqrt{2\pi} i \frac{c^3k}{(c^2k^2-\lambda^2)^{\frac34}}\frac{\partial}{\partial t} \left\{t\sqrt{\frac{2}{\pi \sqrt{c^2k^2-\lambda^2} }}\left(\frac{\sin(t \sqrt{c^2k^2-\lambda^2})}{t\sqrt{c^2k^2-\lambda^2}}-\cos(t \sqrt{c^2k^2-\lambda^2})\right)\right\}\notag\\
		&\quad-2c^2 t\cos(kct)\notag\\
		&=2i \frac{c^3 k}{\sqrt{c^2k^2-\lambda^2}}t \sin(t \sqrt{c^2k^2-\lambda^2})-2c^2 t\cos(kct).\notag
		\end{align}
		
		By plugging into \eqref{eq:olombis} the results \eqref{eq:Int1}, \eqref{eq:Int2} and \eqref{eq:Int3}, we get the expression \eqref{eq:teoolomof}. Besides, since $z\mapsto f(z,v)$ is continuous on $\overline{\mathbb D},$ from \eqref{eq:olom} follows the continuity of  $z\mapsto T_tf(z,v)$ on $\overline{\mathbb D}.$

		(iii)  We prove that $u(z,v,t),v\in\{-c,c\},$ are the solutions of the integral equations \eqref{eq:inteq}. We observe that
		\begin{align*}
		u(z,v,t)=\es_{z,v}[f(Z(t),V(t))]=\es_{z,v}[f(Z(t),V(t))\mathds 1_{T_1<t}]+\es_{z,v}[f(Z(t),V(t))\mathds 1_{T_1\geq t}]
		\end{align*}
		Clearly, $\es_{z,v}[f(Z(t),V(t))\mathds 1_{T_1\geq t}]=e^{-\lambda t} f(ze^{ivt},v),$ while
		\begin{align*}
		\es_{z,v}[f(Z(t),V(t))\mathds1_{T_1< t}]&=\es_{z,v}[\es_{z,v}[f(Z(t),V(t))\mathds 1_{T_1< t}|\mathcal F_{T_1}]]\\
		&=\es_{z,v}[\mathds1_{T_1< t}\,\es_{Z_1,-v}[f(Z(t),V(t))]]\\
		&=\lambda \int_0^t e^{-\lambda s} u(ze^{ivs},-v,t-s)\de s.
		\end{align*}
		where $Z_1:=Z(T_1)=ze^{iT_1v}$ and $V(T_1)=-v.$ 
		
		Now, we show the result \eqref{eq:ke}. First of all, we prove that the family of linear operator $\{T_t f,t\geq 0\}$ from $ B(\mathbb G,\mathbb C)$ to $ B(\mathbb G,\mathbb C)$ represents a $C_0$-semigroup (see Definition at page 5 in \cite{gold2}). Note that $\sup\{||T_tf||: f\in B(\mathbb G,\mathbb C), ||f||\leq 1\}<\infty.$ It is easy to prove that $T_t ,t\geq 0,$ is a semigroup; i.e. $T_0 f=f$ and $T_s T_t f=T_{s+t }f,$  for all $f\in B(\mathbb G,\mathbb C)$ and all $s,t\geq 0.$  Finally, we observe that
		\begin{align*}
		\es_{z,v}[f(Z(t),V(t))]&=(1-\lambda t) f(ze^{ivt},v)+\lambda t f(ze^{ivO(t)},-v)+o(t),
		\end{align*}
		which allows to claim that $t\mapsto T_t f$ is strongly continuous; that is
		\begin{equation}
		\lim_{t\downarrow 0}|| T_t f(z,v)-f(z,v)||=0,\quad f\in B(\mathbb G, \mathbb C).
		\end{equation}
		
		Let $\Psi (\theta, v):=f(re^{i\theta},v),$ where $0<r<1,\theta\in[0,2\pi)$ and $ v\in\{-c,c\}.$ Let $z\mapsto f(z,v)\in A(\mathbb D),$ from \eqref{eq:inteq} we derive
		\begin{align*}
		\es_{z,v}[f(Z(t),V(t))]&=(1-\lambda t) f(ze^{ivt},v)+\lambda t f(ze^{ivO(t)},-v)+o(t)\\
		&=(1-\lambda t) \Psi(\theta+vt,v)+\lambda t \Psi(\theta+vO(t),-v)+o(t)\\
		&=(1-\lambda t)\left[\Psi(\theta,v)+vt  \frac{\partial \Psi}{\partial \theta}(\theta,v)\right]+\lambda t \Psi(\theta+vO(t),-v)+o(t)\\
		&=(1-\lambda t)\left[f(z,v)+vt  \frac{\partial f}{\partial \theta}(z,v)\right]+\lambda t f(ze^{ivO(t)},-v)+o(t),
		\end{align*}
		as $t\downarrow 0.$
		Therefore, we have
		\begin{align*}
		\lim_{t\downarrow 0}\left|\left|\frac{\es_{z,v}[f(Z(t),V(t))]-f(z,v)}{t} -\mathfrak Lf(z,v)\right|\right|=0
		\end{align*}
		where
		\begin{align*}
		\mathfrak Lf(z,v)&=\mathfrak L\Psi(\theta,v)\\
		&=v  \frac{\partial \Psi}{\partial \theta}(\theta,v) +\lambda[\Psi(\theta,-v)-\Psi(\theta,v)], \\
		&= v  \frac{\partial f}{\partial \theta}(z,v) +\lambda[f(z,-v)-f(z,v)],\\
		&=izv f'(z,v)+\lambda[f(z,-v)-f(z,v)],\quad v\in \{-c,c\},
		\end{align*}
		where the convergence being with respect to the supremum norm $||f||=\sup_{(z,v)\in\mathbb G}|f(z,v)|.$  Since $T_tf ,t\geq 0,$ is a $C_0$-semigroup, by Theorem 1.2 in \cite{gold2} follows the result \eqref{eq:ke}.
		
		(iv) We recall that the resolvent set $\rho(\mathfrak L)$ is given by $\mu\in\mathbb R$ such that $\mu-\mathfrak L:$Dom$(\mathfrak L)\to B(\mathbb G,\mathbb C)$ is bijective and $R_\mu(\mathfrak L)$ is a bounded linear operator from Dom$(\mathfrak L)$ to $B(\mathbb G,\mathbb C).$  Since $||T_t||\leq 1$, for all $t\geq 0,$   and by resorting (iii), we have that $\{T_t f,t\geq 0\}$ is a $C_0$-contraction semigroup with generator $\mathfrak L.$ Then it is well-known (see Proposition 2.1 in \cite{ethier}) that $\rho(\mathfrak L)\neq \varnothing;$ i.e. $(0,\infty)\subset \rho(\mathfrak L),$ and
		\begin{align}\label{eq:res}
		R_\mu(\mathfrak L) f = \int_0^\infty e^{- \mu t } T_t f \de t
		\end{align}
		for all $f\in  B(\mathbb G,\mathbb C)$ and $\mu>0.$ The representation \eqref{eq:res} implies that $R_\mu(\mathfrak L)\in $ Dom$(\mathfrak L)$ for $\mu>0$ and the first claim of the theorem immediately follows.

		For the second part of the statement, note that by recalling (ii), we have that 
		its resolvent can be expressed as, for $ s>0 $ (see, e.g., \cite{GR}),
		\begin{align}
		R_s(\mathfrak L) f(z,v) = \int_0^\infty e^{- s t } T_t f(z,v) \de t = \int_0^\infty e^{-s t} e^{-\lambda t} 
		\sum_{k=0}^\infty z^k [ d_k(t,v) + e_k(t,v)] \de t
		\end{align}
		Let $ \mathcal L_s (f) $ be the Laplace transform of $ f$ evaluated at point $ s $. We recall the well-known relationships
		\begin{gather*}
		\mathcal L_s (\cos(at)) = \frac{s}{s^2 + a^2}, \quad 
		\mathcal L_s (\sin(at)) = \frac{a}{s^2 + a^2} , \quad
		\mathcal L_s (\cosh(at)) = \frac{s}{s^2 - a^2} , \quad\\
		\mathcal L_s (\sinh(at)) = \frac{a}{s^2 - a^2} , \quad
		\mathcal L_s (e^{iat}) = \frac{s + i a}{s^2 + a^2}. \quad
		\end{gather*}
		Then 
		\begin{align*}
		& \int_0^\infty 
		e^{-(\lambda + s)t} d_k(t,v) \de t  
		\\
		&= 
		a_k(v) \int_0^\infty e^{- (\lambda + s )t} \Big[ 
		e^{ikvt}+\cos(t\sqrt{c^2k^2-\lambda^2}) 
		+ i \text{\sgn}(v)\frac{ck \sin(t\sqrt{c^2k^2-\lambda^2})}{\sqrt{c^2k^2-\lambda^2}}
		\notag \\
		& \quad -(1+\text{\sgn}(v))\cos(kct)
		\Big] \de t
		\notag \\
		&=
		a_k(v)\bigg( \frac{\lambda + s + i k v}{(\lambda + s)^2 + (kv)^2} 
		+ \frac{\lambda + s}{(\lambda + s)^2 + c^2k^2 - \lambda^2} 
		+\frac{i\,  \sgn (v) ck}{(\lambda + s)^2 + c^2k^2 - \lambda^2}
		\notag \\
		& \quad -(1 + \sgn (v)) \frac{\lambda + s}{(\lambda + s)^2 + c^2k^2} \bigg) 
		\notag \\
		&=
		a_k(v) \bigg( - \sgn(v) \frac{1}{\lambda + s + i k c}  + \frac{\lambda + s + i k v }{s^2 + 2 \lambda s + c^2k^2}\bigg)  
		\notag 
		\end{align*}
		and
		\begin{align*}
		\int_0^\infty 
		e^{-(\lambda + s)t} e_k(t,v) \de t  
		&= 
		a_k(-v) \int_0^\infty e^{- (\lambda + s )t} 
		\frac{\lambda \sin(t\sqrt{c^2k^2-\lambda^2})}{\sqrt{c^2k^2-\lambda^2}}\de t
		\\
		&= 
		\frac{\lambda}{s^2 + 2 \lambda s + c^2k^2} a_k(-v),
		\notag 
		\end{align*}
		which concludes the proof.
	\end{proof}
	
	\begin{remark}
	It is worth to observe that:
	\begin{itemize}
	\item in the theory of complex evolution equations the space of functions $A(\mathbb D)$ assumes the same  role of the space of all bounded uniformly continuous function on $\mathbb R$ which arises in the study of the semigroup operators. On this point see \cite{GGG2};
	\item the result \eqref{eq:ke} can be rewritten as follows. Let $\hat f:=(f(z,c), f(z,-c))'$ where $f\in$Dom($\mathfrak L$); one has that infinitesimal generator of $ (X(t),V(t),t\geq 0)$ is given by
\end{itemize}
	$$\mathfrak L\hat f=\left[ \left(  \begin{matrix} % or pmatrix or bmatrix or Bmatrix or ...
	      i zc\frac{\partial}{\partial z} & 0 \\
	      0 & -iz c\frac{\partial}{\partial z}  \\
	   \end{matrix}\right)+ \left(  \begin{matrix} % or pmatrix or bmatrix or Bmatrix or ...
	     -\lambda & \lambda \\
	      \lambda & -\lambda  \\
	   \end{matrix}\right) \right]\hat f.$$
	   Therefore, the infinitesimal generator of the circular telegraph process can be obtained form that of the standard telegraph process by replacing the velocity $\pm c$ (on the line) with  complex velocity $\pm iz c.$	\end{remark}
	The next result regards a complex version of the telegraph equation and connects the wrapped up telegraph process with the hyperbolic partial differential equations.
	
	\begin{corollary}
		Let  $u_1:=u(z,c,t)$ and $u_2:=u(z,-c,t).$ We denote by  $ p:=\frac{u_1+u_2}{2}$ and $ w:=\frac{u_1-u_2}{2}.$ Then, $p$ is solution to the complex telegraph equation
		\begin{equation}\label{eq:cte}
		\frac{\partial^2 p}{\partial t^2}+2\lambda \frac{\partial p}{\partial t}=-c^2 z^2[p''+p']=c^2\frac{\partial^2 p}{\partial\theta^2},\quad z=re^{i\theta}, z\neq 0, \theta\in[0,2\pi), 0<r<1.
		\end{equation}
	\end{corollary}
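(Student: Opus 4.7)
The plan is to reduce the corollary to a direct consequence of the backward Kolmogorov equation \eqref{eq:ke} stated in point (iii) of the preceding theorem, by the classical trick of combining the two coupled first-order equations for $u_1$ and $u_2$ into a single second-order equation for their half-sum.

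First I would specialize \eqref{eq:ke} at $v=c$ and $v=-c$ to obtain the linear system
\begin{align*}
\partial_t u_1 &= izc\, \partial_z u_1 + \lambda(u_2-u_1),\\
\partial_t u_2 &= -izc\, \partial_z u_2 + \lambda(u_1-u_2).
\end{align*}
Adding and subtracting these two identities and using the definitions $p=(u_1+u_2)/2$, $w=(u_1-u_2)/2$ yields the decoupled first-order system
\begin{align*}
\partial_t p &= izc\, \partial_z w,\\
\partial_t w &= izc\, \partial_z p - 2\lambda w,
\end{align*}
in which the Poisson scattering term has disappeared from the evolution of $p$, exactly as in the classical derivation of the telegraph equation from the Goldstein--Kac system.

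Next I would differentiate the first equation once more in $t$, replace $\partial_t w$ using the second equation, and recognise $izc\,\partial_z w = \partial_t p$ to eliminate $w$. Concretely,
\begin{align*}
\partial_t^2 p = izc\, \partial_z(\partial_t w) = izc\,\partial_z\bigl(izc\,\partial_z p - 2\lambda w\bigr)
= -c^2\bigl(z\,\partial_z p + z^2\,\partial_z^2 p\bigr) - 2\lambda\,\partial_t p,
\end{align*}
which is precisely the first equality in \eqref{eq:cte} (with $p'=\partial_z p$, $p''=\partial_z^2 p$).

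Finally, to obtain the equivalent $\theta$-form, I would use that for $z=re^{i\theta}$ with $r$ fixed and $f$ analytic in $z$, the chain rule gives $\partial_\theta f = iz\,\partial_z f$ and therefore
\begin{align*}
\partial_\theta^2 f = iz\,\partial_z(iz\,\partial_z f) = -\bigl(z\,\partial_z f + z^2\,\partial_z^2 f\bigr),
\end{align*}
so that the spatial operator appearing above coincides with $c^2\,\partial_\theta^2 p$. I do not expect a serious obstacle: analyticity of $z\mapsto u(z,v,t)$ in $\mathbb D$ (guaranteed by part (ii) of the theorem) legitimises every term-wise differentiation; the only point requiring a little care is distinguishing the two meanings of ``prime'' in \eqref{eq:cte}, which is just the holomorphic $\partial_z$ derivative, and checking that the $r<1$, $z\neq 0$ restriction is enough for the $\theta$--$z$ coordinate change to be smooth.
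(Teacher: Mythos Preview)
Your proposal is correct and follows essentially the same route as the paper: specialise the backward equation \eqref{eq:ke} at $v=\pm c$, form the Goldstein--Kac pair for $p$ and $w$, then eliminate $w$ by differentiating in $t$ and using the second equation. The paper computes the mixed derivative via $\partial_\theta$ and you via $\partial_z$, but this is only a cosmetic difference; your final identification $\partial_\theta^2 = -(z\partial_z + z^2\partial_z^2)$ is exactly what closes the argument. (Incidentally, your expression $-c^2(z\,p' + z^2 p'')$ is the correct one and matches $c^2\partial_\theta^2 p$; the $-c^2 z^2[p''+p']$ in the displayed statement is a misprint for $-c^2[z^2 p'' + z p']$.)
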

	
	\begin{proof}
		From \eqref{eq:ke}, we can write down the following system
		\begin{equation}
		\begin{cases}
		\frac{\partial u_1}{\partial t}=iz c u_1'+\lambda[u_2-u_1]\\
		\frac{\partial u_2}{\partial t}=-iz c u_2'+\lambda[u_1-u_2]
		\end{cases}
		\end{equation}
		and then
		\begin{equation}
		\begin{cases}
		\frac{\partial p}{\partial t}=iz c w'\\
		\frac{\partial w}{\partial t}=izcp'-2\lambda w
		\end{cases}
		\end{equation}
		
		Therefore
		$$\frac{\partial^2 p}{\partial t^2}=iz c \frac{\partial w'}{\partial t},\quad \frac{\partial^2 w}{\partial t\partial \theta}=iz \frac{\partial w'}{\partial t}=(iz)^2 c p'+(iz)^2 c p''-2\lambda izw'$$
		
		Hence
		$$\frac{\partial^2 p}{\partial t^2}=-c^2 z^2[p''+p'] -2\lambda izc w'=-c^2 z^2[p''+p'] -2\lambda \frac{\partial p}{\partial t}$$
		
	\end{proof}

	\begin{remark}
		As shown in \cite{GGG}, one has that
		\begin{align}\label{eq:lawtelcomp}
		u(z,t)
		&=\frac12 e^{-\lambda t}[f(ze^{ ict})+f(ze^{ -ict})]\\
		&\quad+\frac{\lambda e^{-\lambda t}}{2}  \int_{-t}^{t} I_0\left(\lambda \sqrt {t^2-x^2}\right)f(ze^{ icx})\de x+\frac{\lambda t e^{-\lambda t}}{2}   \int_{-t}^{t} \frac{ I_1\left(\lambda \sqrt{t^2-x^2}\right)}{\sqrt{t^2-x^2}}f(ze^{ icx})\de x\notag\\
		&=\frac12 e^{-\lambda t}[f(ze^{ ict})+f(ze^{ -ict})]\notag\\
		&\quad+\frac{\lambda e^{-\lambda t}}{2c}  \int_{-ct}^{ct} I_0\left(\frac{\lambda}{c} \sqrt {c^2t^2-x^2}\right)f(ze^{ ix})\de x+\frac{\lambda t e^{-\lambda t}}{2}   \int_{-ct}^{ct} \frac{ I_1\left(\frac{\lambda}{c} \sqrt{c^2t^2-x^2}\right)}{\sqrt{c^2t^2-x^2}}f(ze^{ ix})\de x\notag\\
		&=\mathds E[f (ze^{iX(t)})]\notag
		\end{align}
		is the solution of \eqref{eq:cte} subjected to the initial conditions of $u(z,0)=f(z), \quad \frac{\partial u}{\partial t}(z,0)=0$. Furthermore, similarly to the Kac's interpretation of the solution of the classical telegraph equation, \eqref{eq:lawtelcomp} has the following stochastic representation 
		
		\begin{equation}
		u(z,t)=\frac12 \mathds{E}\left[f\left(ze^{ic\int_0^t (-1)^{N(s)} \de s}\right)\right]+\frac12 \mathds{E}\left[f\left(ze^{-ic\int_0^t (-1)^{N(s)} \de s}\right)\right]
		\end{equation}
		where $\mathds{E}$ represents the mean value w.r.t. $N(t), t\geq 0.$ 
	\end{remark}

	\section{On the probability distribution of $Z$ }

	For the sake of simplicity we deal with telegraph random evolution  on a unit circle $\mathbb S:=\mathbb S_1$ and $Z(0)=1,$ i.e. the initial angle is $\theta=0$. In order to study the probability distribution of $Z,$ a classical approach (see, e.g., \cite{feller}) is based on the identification of the two endpoints of the interval $[0,2\pi)$ and then the latter is interpreted as a circle of unit radius: i.e we apply the quotient map $x\mapsto e^{ix}$ or $x\mapsto x (\text{mod}\, 2\pi)$,  $x\in [0,2\pi)$. In other words, we identify $Z$ with its stochastic angular component $\{X(t),t\geq 0\}$.
	
	% that is $Z(t)=X(t)(\text{mod}\, 2\pi),$ for any $t\geq 0.$
	
	Now, let  $p$ and $\mu_n$ be the probability laws of the classical telegraph given by \eqref{eq:lawtel} and \eqref{eq:disttp}, respectively. For any $t\geq 0,$ the circular telegraph process has the following wrapped probability laws (see, e.g., \cite{mardia} for an introduction to the wrapped probability distributions)
	\begin{align}\label{eq:densctp}
	p_Z(\de \theta, t) &=\mathds P(Z(t)\in \de \theta)\\
	&=\sum_{k=-\infty }^\infty \mathds P(X(t)\in  \de(\theta+2k\pi))\notag\\
	&=\sum_{k=-\infty }^\infty p(\de(\theta+2k\pi),t)\notag\\
	&=
	\sum_{k=-\infty }^\infty
	\frac12 e^{-\lambda t}[\delta_{- ct - 2 k \pi}(\de \theta)+\delta_{ct - 2 k \pi}(\de \theta)]+
	\sum_{k=-\infty }^\infty
	\mu(\theta + 2 k \pi,t) \mathds{1}_{|\theta + 2 k \pi |<ct}\,\de \theta,\notag
	\end{align}
	while its conditional probability distribution with respect to the number of changes of rotation becomes
	\begin{equation}
	p_Z^n( \de\theta, t)=\sum_{k=-\infty}^\infty\mu_n(\theta+2k\pi,t)	\de \theta,
	\end{equation}
	where $\theta\in[0,2\pi)$. It is worth to mention that $\frac12 e^{-\lambda t}\sum_{k=-\infty }^\infty[\delta_{- ct - 2 k \pi}(\de \theta)+\delta_{ct - 2 k \pi}(\de \theta)]$ represents the singular component of the probability distribution of $Z$ and it emerges from the following 
	$$\mathds P(Z(t)=e^{\pm ict})=\frac12 \mathds P(N(t)=0)=\frac12 e^{-\lambda t}.$$
	Clearly, the second part in \eqref{eq:densctp} is the absolutely continuous component of the probability law of $Z(t), t\geq 0.$
	
	\begin{remark}
	From \eqref{eq:densctp}, it is easy to check that the centripetal acceleration $A(t)=-c^2 Z(t), t\geq 0, (\mathds P$-a.e.) admits as absolutely continuous component of its probability law
	$$\frac{1}{c^2}\sum_{k=-\infty }^\infty
	\mu(-\varphi /c^2 + 2 k \pi,t) \mathds{1}_{|-\varphi/c^2 + 2 k \pi |<ct},\quad \varphi \in(-2\pi c^2,0]. $$
	\end{remark}
	
	The following theorem concerns the moments of the circular telegraph process $\{Z(t),t\geq 0\}.$

	\begin{theorem}\label{th:mom}
		We have
		\begin{align}\label{eq:alfamom}
		\mathds{E}[Z^\alpha(t) ]&=\mathds{E}[e^{i\alpha X(t)}]\\
		&= e^{-\lambda t}\left[\cosh(t\sqrt{\lambda^2-\alpha^2c^2})+\frac{\lambda }{\sqrt{\lambda^2-\alpha^2c^2}}\sinh(t\sqrt{\lambda^2-\alpha^2c^2})\right],\quad \alpha\in\mathbb R,\notag
		\end{align}
		and
		\begin{align}\label{eq: cov}
		\mathds{E}\left[Z^\alpha(t) Z^\beta(t') \right]&=e^{-\lambda \max\{t,t'\}}\Big[\cosh(\min\{t,t'\}\sqrt{\lambda^2-(\alpha + \beta)^2c^2}) 
		\\
		& 
		\quad+\frac{\lambda }{\sqrt{\lambda^2- (\alpha + \beta)^2 c^2}}\sinh(\min\{t,t'\}\sqrt{\lambda^2-(\alpha + \beta)^2 c^2})\Big] \times\notag \\
		&\quad \times\left[\cosh(|t- t'|\sqrt{\lambda^2- \omega^2 c^2  })+\frac{\lambda }{\sqrt{\lambda^2- \omega ^2 c^2}}\sinh(|t- t'|\sqrt{\lambda^2- \omega ^ 2 c^2})\right],\notag
		\end{align}
		where $\omega  = \alpha \, \mathds{1}_{t > t'} + \beta \,\mathds{1}_{t \leq t'}, \, \alpha, \beta \in \mathbb{R}$.
	\end{theorem}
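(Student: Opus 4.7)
The first identity is essentially free: since $Z^{\alpha}(t) = e^{i\alpha X(t)}$, the quantity $\E[Z^{\alpha}(t)]$ is just the characteristic function of $X(t)$ at the point $\xi=\alpha$, which is given in \eqref{eq:cftp}. Writing $\sqrt{\alpha^{2}c^{2}-\lambda^{2}} = i\sqrt{\lambda^{2}-\alpha^{2}c^{2}}$ and using $\cos(iy)=\cosh y$, $\sin(iy)=i\sinh y$, the piecewise trigonometric/hyperbolic formula in \eqref{eq:cftp} collapses into the single hyperbolic expression in \eqref{eq:alfamom}, which, by analytic continuation in $\alpha$, is unambiguous for every real $\alpha$.

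For the joint moment I would assume without loss of generality $t'\leq t$, so that $\max(t,t')=t$, $\min(t,t')=t'$, $|t-t'|=t-t'$ and $\omega=\alpha$. The starting point is the algebraic decomposition
\[
\alpha X(t)+\beta X(t') = (\alpha+\beta)X(t') + \alpha\bigl(X(t)-X(t')\bigr),
\]
combined with the strong Markov property of the pair $(X(t),V(t))$: conditionally on $\mathcal F_{t'}$, the increment $X(t)-X(t')$ depends on the past only through $V(t')$ and, by time-homogeneity, is distributed as the telegraph process $X(t-t')$ issued from the initial velocity $V(t')$. By the tower property,
\[
\E\bigl[e^{i\alpha X(t)+i\beta X(t')}\bigr]
= \E\Bigl[e^{i(\alpha+\beta)X(t')}\,\tilde\phi\bigl(\alpha,t-t',V(t')\bigr)\Bigr],
\]
where $\tilde\phi(\alpha,s,v):=\E[e^{i\alpha X(s)}\mid V(0)=v]$ is the conditional characteristic function of $X(s)$, which can be written explicitly from Theorem~1 using the Fourier identities \eqref{eq:Int1}–\eqref{eq:Int3} already proved in part (ii) of Theorem~2.

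To conclude, I would use that under the initial uniform law, $V(t')\sim\mathrm{Unif}\{-c,+c\}$ for every $t'\geq 0$, so that averaging $\tilde\phi(\alpha,t-t',\cdot)$ over $v\in\{-c,c\}$ recovers the unconditional characteristic function $\E[e^{i\alpha X(t-t')}]$ given by \eqref{eq:alfamom}. Inserting this into the previous display produces the product decomposition
\[
\E\bigl[e^{i\alpha X(t)+i\beta X(t')}\bigr]
= \E\bigl[e^{i(\alpha+\beta)X(t')}\bigr]\cdot \E\bigl[e^{i\alpha X(t-t')}\bigr],
\]
after which \eqref{eq:alfamom} applied to each factor, together with $e^{-\lambda t'}e^{-\lambda(t-t')}=e^{-\lambda\max(t,t')}$, yields the claimed expression.

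The hard part of this plan is justifying the last factorization rigorously: $X(t')$ and $V(t')$ are in fact \emph{not} independent under the joint law (as one sees by comparing the singular components of their joint and marginal distributions). The cleanest way around this obstacle is to decompose $\tilde\phi(\alpha,s,v)=A(\alpha,s)+\sgn(v)\,B(\alpha,s)$, made explicit via the $I_0$- and $I_1$-pieces of Theorem~1, and then to verify—using the trigonometric/hyperbolic identities and the Fourier formulae \eqref{eq:Int1}–\eqref{eq:Int3}—that the $\sgn(V(t'))$-cross term, when paired with $\E[e^{i(\alpha+\beta)X(t')}\sgn(V(t'))]$ computed from the same transition law, recombines with the symmetric term to give exactly the product $F(t',\alpha+\beta)\,F(t-t',\alpha)$ in the statement. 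This algebraic bookkeeping is the only nontrivial step; the rest is symbolic simplification.
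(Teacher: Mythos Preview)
Your treatment of \eqref{eq:alfamom} and the decomposition $\alpha X(t)+\beta X(t')=(\alpha+\beta)X(t')+\alpha\bigl(X(t)-X(t')\bigr)$ are exactly what the paper does. The paper then conditions on $N(t')$ and passes directly to the product $\E\bigl[e^{i(\alpha+\beta)X(t')}\bigr]\cdot\E\bigl[e^{i\alpha X(t-t')}\bigr]$ without further justification; this is precisely the step you single out as delicate because $X(t')$ and $V(t')$ are not independent.

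Your caution is warranted, but the proposed remedy cannot close the gap: carrying out the decomposition $\tilde\phi(\alpha,s,v)=A(\alpha,s)+\sgn(v)\,B(\alpha,s)$ and tracking the cross term $\E\bigl[e^{i(\alpha+\beta)X(t')}\sgn(V(t'))\bigr]\,B(\alpha,t-t')$ does \emph{not} recombine into the stated product. A quick consistency check exposes this. Set $\beta=0$ in \eqref{eq:cov}: the left-hand side is $\E[Z^\alpha(t)]$, given by \eqref{eq:alfamom}, while the right-hand side collapses to $\phi_\alpha(t')\,\phi_\alpha(t-t')$ with $\phi_\alpha(s)=\E[e^{i\alpha X(s)}]$. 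But the telegraph characteristic function is not multiplicative in time—for instance, with $\lambda=c$ and $\alpha=1$ one has $\phi_1(s)=e^{-\lambda s}(1+\lambda s)$, and $(1+\lambda t')(1+\lambda(t-t'))\neq 1+\lambda t$ for $0<t'<t$. Hence the factorization fails, formula \eqref{eq:cov} as stated is inconsistent with \eqref{eq:alfamom}, and an honest execution of your ``algebraic bookkeeping'' would produce an additional cross term rather than the claimed product.
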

	\begin{proof}
		The result \eqref{eq:alfamom} follows by observing that the moment of order $\alpha$ of $Z(t)$ coincides with the characteristic function of the standard telegraph process \eqref{eq:cftp}.
		
		Let $t>t'.$ Therefore, we can write down
		\begin{align*}
		&\mathds{E}\left[Z^\alpha(t) Z^\beta(t')\right]\\
		&=\mathds{E}\left[\exp \left\{iV(0)\left(\alpha \int_0^t (-1)^{N(s)}\de s+ \beta \int_0^{t '}(-1)^{N(s)}\de s\right)\right\}\right]\\
		&=\mathds{E}\left[\exp\left\{i (\alpha + \beta )V(0)\int_0^{t'} (-1)^{N(s)}\de s+i\alpha V(0)\int_{t '}^t(-1)^{N(s)}\de s\right\}\right]\\
		&=\mathds{E}\left[\exp\left\{i (\alpha + \beta )V(0)\int_0^{t'} (-1)^{N(s)}\de s+i \alpha V(0)(-1)^{N(t')}\int_{t '}^t(-1)^{N(s)-N(t')}\de s\right\}\right]\\
		&=\mathds{E}\left[\exp\left\{i(\alpha + \beta )V(0)\int_0^{t'} (-1)^{N(s)}\de s+i\alpha V(0)(-1)^{N(t')}\int_{0}^{t-t'}(-1)^{N(u+t')-N(t')}\de u\right\}\right]\\
		&=\mathds{E}\left[\mathds{E}\left[\exp\left\{i(\alpha + \beta )V(0)\int_0^{t'} (-1)^{N(s)}\de s+i\alpha V(0)(-1)^{N(t')}\int_{0}^{t-t'}(-1)^{N(u+t')-N(t')}\de u\right\}\Bigg| N(t')\right]\right]\\
		&=\mathds{E}\left[\exp\left\{i(\alpha + \beta )V(0)\int_0^{t'} (-1)^{N(s)}\de s\right\}\right]\\
		&\quad\times \mathds{E}\left[\mathds{E}\left[\exp\left\{i\alpha V(0)(-1)^{N(t')}\int_{0}^{t-t'}(-1)^{N(u+t')-N(t')}\de u\right\}\Bigg| N(t')\right]\right]\\
		&=\mathds{E}\left[e^{i(\alpha + \beta )X(t')}\right] \mathds{E}\left[\mathds{E}\left[\exp\left\{i\alpha V(0)(-1)^{N(t')}\int_{0}^{t-t'}(-1)^{N(u)}\de u\right\}\Bigg| N(t')\right]\right]\\
		&=e^{-\lambda t'}\left[\cosh(t'\sqrt{\lambda^2-(\alpha + \beta)^2c^2})+\frac{\lambda }{\sqrt{\lambda^2-(\alpha + \beta)^2 c^2}}\sinh(t'\sqrt{\lambda^2-(\alpha + \beta)^2 c^2})\right]\\
		&\quad \times e^{-\lambda (t- t')}\left[\cosh((t-t')\sqrt{\lambda^2-\alpha^2 c^2})+\frac{\lambda }{\sqrt{\lambda^2-\alpha^2 c^2}}\sinh((t-t')\sqrt{\lambda^2-\alpha^2 c^2})\right]
		\end{align*}
		Therefore we can conclude that the result \eqref{eq: cov} holds true.\end{proof}

	\begin{remark}
		As noted in \cite{mardia}, for integer values of $ \alpha, $ \eqref{eq:alfamom} 
		can be interpreted as the characteristic function of a wrapped random variables.
		That is the Fourier transform of $ Z(t) $ is determined by the doubly infinite sequence
		\begin{equation}\label{eq:alfamom2}
		\phi_k^t = \int_0^{2\pi} e^{i k \theta } p_Z(\de \theta,t) = \mathds E e^{i k X(t)}, \quad k=0,\pm 1,\pm 2,.... 
		\end{equation}
		Analogously \eqref{eq: cov} gives the joint characteristic function of $(Z(t), Z(t'))$ for integer $\alpha, \beta  $. 
	\end{remark}

	\begin{corollary}
		The distribution of $Z(t)$, for any $ t \geq 0 $, admits the following representation
		\begin{align}\label{eq:fourier-full}
		&\int_A p_Z(\de \theta) 
		\\
		&= 
		\lim_{r\to 1^-} 
		\frac{1}{2\pi}\int_A 
		\bigg( 1 + 2 e^{-\lambda t} \sum_{k=1}^\infty 
		\left[\cos (t\sqrt{ k^2 c^2-\lambda^2}) + 
		\frac{ \lambda}{\sqrt{ k^2 c^2-\lambda^2}} \sin(t\sqrt{ k^2 c^2-\lambda^2}) \right] r^k \cos (k\theta) \bigg) \de \theta, \notag 
		\end{align}
		for all Borel subset $ A$ of $[0,2\pi)$ such that $ \partial A $ has null $ p_Z $-measure. 
	\end{corollary}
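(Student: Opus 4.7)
The plan is to view $p_Z(\cdot,t)$ as a probability measure on the torus $\mathbb T=[0,2\pi)$ whose Fourier coefficients are precisely the quantities $\phi_k^t=\mathds E e^{ikX(t)}$ already computed in Theorem \ref{th:mom}, and then to recover the measure from its Fourier coefficients via Abel (Poisson) summation. Direct Fourier inversion is not available because $p_Z$ has a singular part (the two Dirac masses $\frac12 e^{-\lambda t}\delta_{\pm ct}$ wrapped around $\mathbb S$), so the series $\sum_k \phi_k^t e^{-ik\theta}$ does not converge pointwise or in $L^1$; the Abel regularisation by the factor $r^{|k|}$ is precisely the device that sidesteps this obstacle.

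First, from \eqref{eq:alfamom} evaluated at the integer $\alpha=k$, together with the identities $\cosh(ix)=\cos x$, $\sinh(ix)=i\sin x$ (so that the formula in Theorem \ref{th:mom} can be analytically continued to values of $k$ with $|k|>\lambda/c$), I read off
\[
\phi_k^t = e^{-\lambda t}\left[\cos(t\sqrt{k^2c^2-\lambda^2})+\frac{\lambda}{\sqrt{k^2c^2-\lambda^2}}\sin(t\sqrt{k^2c^2-\lambda^2})\right],\qquad k\in\mathbb Z.
\]
Symmetry of $X(t)$ (which in turn follows from $V(0)$ being uniform on $\{-c,c\}$) gives $\phi_k^t=\phi_{-k}^t\in\mathbb R$, and in particular $\phi_0^t=1$.

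Next I invoke the classical Abel-summation / Poisson-kernel convergence on $\mathbb T$: for every finite Borel measure $\mu$ on $\mathbb T$ with Fourier coefficients $\hat\mu(k)$, the Poisson integrals
\[
\mu_r(\de\theta):=\frac{1}{2\pi}\sum_{k=-\infty}^{\infty}\hat\mu(k)\,r^{|k|}e^{-ik\theta}\,\de\theta,\qquad 0<r<1,
\]
converge weakly to $\mu$ as $r\to 1^-$; consequently, by the portmanteau theorem, $\mu_r(A)\to\mu(A)$ for every Borel set $A$ whose boundary is $\mu$-null. Applied to $\mu=p_Z(\cdot,t)$ with $\hat\mu(k)=\phi_k^t$ and using the symmetry $\phi_k^t=\phi_{-k}^t$, this collapses the doubly-infinite sum to
\[
\sum_{k=-\infty}^{\infty}\phi_k^t\,r^{|k|}e^{-ik\theta}=1+2\sum_{k=1}^{\infty}\phi_k^t\,r^k\cos(k\theta),
\]
and substituting the explicit form of $\phi_k^t$ produces precisely the right-hand side of \eqref{eq:fourier-full}.

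The only nontrivial point, and hence the main obstacle, is the justification of the weak convergence step for a measure with atoms: I will appeal to the standard fact that the Poisson kernel $P_r(\theta)=\sum_{k\in\mathbb Z}r^{|k|}e^{ik\theta}$ is a summability kernel on $\mathbb T$, so that $P_r*\mu\to\mu$ weakly (and in fact the densities $P_r*\mu$ converge to $\mu$ in the vague sense of measures); combined with the portmanteau characterisation of weak convergence this yields convergence on every continuity set of $p_Z(\cdot,t)$, which is exactly the class of Borel sets $A$ with $p_Z$-null boundary specified in the statement. Interchanging the limit $r\to 1^-$ with the integral over $A$ is legitimate because the integrand is uniformly bounded on $[0,2\pi)\times[r_0,1)$ for any $r_0<1$ in view of $|\phi_k^t|\le 1$ (the sum defines a continuous function of $\theta$ for each $r<1$).
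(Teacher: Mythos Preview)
Your proposal is correct and follows essentially the same route as the paper: the paper simply invokes Theorem XIX.6.1 in Feller (Abel/Poisson summation recovers a probability measure on the circle from its Fourier coefficients on continuity sets), then substitutes the explicit coefficients $\phi_k^t$ from \eqref{eq:alfamom2} and uses $\phi_k^t=\phi_{-k}^t$ to collapse the doubly infinite sum. Your argument is the spelled-out version of that citation, with the portmanteau step made explicit.
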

	\begin{proof}
		Following Th. XIX.6.1 in \cite{feller},  the distribution of $ Z $ can be recovered from its Fourier coefficients $ \phi_k^t $ as 
		\[
		\int_A p_Z(\de \theta) 
		= 
		\lim_{r\to 1^-} 
		\int_A 
		\sum_{k=-\infty}^\infty 
		\phi_k^t r^{|k|} e^{i k \theta}  \de \theta,
		\]
		where the result \eqref{eq:fourier-full} emerges after trivial algebra and by exploiting \eqref{eq:alfamom2}.
		Note that the series $  \sum \phi_k^t e^{i k \theta} $ is not guaranteed to converge. 
	\end{proof}

	\begin{remark}
		
		From Theorem \ref{th:mom}, we obtain the following covariance-type function of the circular telegraph process
		\begin{align*}
		\Gamma_Z(t,t')&=\mathds{E}\left[Z(t) Z(t')\right]- \mathds{E}\left[Z(t)\right]\mathds{E}\left[ Z(t')\right]\\
		&=e^{-\lambda \max\{t,t'\}}\left[\cosh(\min\{t,t'\}\sqrt{\lambda^2-4c^2})+\frac{\lambda }{\sqrt{\lambda^2-4c^2}}\sinh(\min\{t,t'\}\sqrt{\lambda^2-4c^2})\right]\\
		&\quad \times\left[\cosh(|t- t'|\sqrt{\lambda^2-c^2})+\frac{\lambda }{\sqrt{\lambda^2-c^2}}\sinh(|t- t'|\sqrt{\lambda^2-c^2})\right]\\
		&\quad-e^{-\lambda (t+t')}\left[\cosh(t\sqrt{\lambda^2-c^2})+\frac{\lambda }{\sqrt{\lambda^2-c^2}}\sinh(t\sqrt{\lambda^2-c^2})\right]\\
		&\quad\times\left[\cosh(t'\sqrt{\lambda^2-c^2})+\frac{\lambda }{\sqrt{\lambda^2-\alpha^2c^2}}\sinh(t'\sqrt{\lambda^2-c^2})\right].
		\end{align*}
	\end{remark}
	
	\section{On the limit process}
	
	Let us deal with the rescaled circular process $Y_n:=\{(Y_n(t),t\geq 0\}, n\in\mathbb N,$ on $\mathbb S,$ where 
	$$Y_n(t):=(Z({nt}))^{\sigma_n}=ze^{i \sigma_n X(nt)},\quad t\in[0,1], Z(0)=z^{1/\sigma_n},$$
	and $\sigma_n:=\frac{1}{c}\sqrt{\frac \lambda n}.$
	In the next theorem, we present the results on the asymptotic behavior of $Y_n.$ Let $C([0,1], \mathbb S)$ be the space of all $\mathbb S$-valued continuous functions on $[0,1]$ equipped
	with the sup-norm topology and $\Longrightarrow$ stands for the weak convergence in $C([0,1], \mathbb S)$.
	
	\begin{theorem}\label{th:conv}
		We have that:
		
		(i) (strong law of large numbers) $(Z(t))^{t^{-1/p}}\longrightarrow 1$ a.s., for $1\leq p<2,$ as $t\to\infty;$
		
		(ii) (weak convergence)
		$Y_n\Longrightarrow z \mathfrak B,$
		where $\mathfrak B:=\{\mathfrak B(t),t\geq 0\}$ represents the circular Brownian motion on $\mathbb S;$ i.e. $\mathfrak B(t):=e^{i B(t)},$  $B:=\{B(t),t\geq 0\}$ is a standard Brownian motion.
	\end{theorem}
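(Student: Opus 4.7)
The plan is to reduce both parts to well-known asymptotic properties of the standard telegraph process $X$ via the representation $Z(t) = z e^{i X(t)}$ and the $1$-Lipschitz map $\Phi: C([0,1],\mathbb{R}) \to C([0,1],\mathbb{S})$ defined by $\Phi(f)(t) := z e^{i f(t)}$ (Lipschitz because $|e^{ia} - e^{ib}| \leq |a-b|$ and $|z| = 1$).

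For part (i), since $|z|=1$ one has $z^{t^{-1/p}} \to 1$ as $t \to \infty$, so $(Z(t))^{t^{-1/p}} = z^{t^{-1/p}} e^{i t^{-1/p} X(t)}$ converges to $1$ almost surely provided $t^{-1/p} X(t) \to 0$ a.s. Because all moments of $X(t)$ are of Gaussian order, i.e.\ $E[X(t)^{2k}] = O(t^k)$ for every integer $k \geq 1$ (as can be checked by differentiating \eqref{eq:cftp} at $\xi=0$), Chebyshev's inequality gives
\[
P\bigl(|X(n)| > \epsilon n^{1/p}\bigr) \leq \frac{E[X(n)^{2k}]}{(\epsilon n^{1/p})^{2k}} = O\bigl(n^{-k(2/p-1)}\bigr),
\]
which is summable as soon as $k > p/(2-p)$, a choice available since $p < 2$. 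Borel--Cantelli then yields $n^{-1/p} X(n) \to 0$ a.s., and the Lipschitz bound $|X(t) - X(n)| \leq c|t - n|$ promotes this to $t^{-1/p} X(t) \to 0$ a.s.\ for continuous $t$. (Alternatively, a functional law of the iterated logarithm for $X$, consequence of the invariance principle below, gives the same conclusion in one stroke.)

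For part (ii), I would first establish $\tilde X_n(\cdot) := \sigma_n X(n\,\cdot) \Longrightarrow B(\cdot)$ in $C([0,1],\mathbb{R})$ and then conclude by the continuous mapping theorem applied to $\Phi$, which gives $Y_n = \Phi(\tilde X_n) \Longrightarrow \Phi(B) = z\mathfrak B$. A time-change argument shows that $\tilde X_n$ is itself a telegraph process on $\mathbb{R}$ with velocities $\pm c_n$, $c_n := \sqrt{n\lambda}$, and switching rate $\lambda_n := n\lambda$, i.e.\ in the classical Kac regime $c_n^2/\lambda_n = 1$. Substituting $(c_n,\lambda_n)$ into the characteristic function \eqref{eq:cftp} and expanding $\sqrt{\lambda_n^2 - \xi^2 c_n^2} = n\lambda - \xi^2/(2) + O(1/n)$ yields the pointwise limit $E[e^{i\xi \tilde X_n(t)}] \to e^{-\xi^2 t/2}$; together with the strong Markov property this produces convergence of the finite-dimensional distributions of $\tilde X_n$ to those of a standard Brownian motion $B$.

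The main obstacle is tightness of $\{\tilde X_n\}$ in $C([0,1],\mathbb R)$, as the increments are neither stationary nor independent and the velocity process $\tilde V_n$ has amplitude diverging with $n$. Via Kolmogorov's criterion it suffices to show $E\bigl[(\tilde X_n(t)-\tilde X_n(s))^4\bigr] \leq C(t-s)^2$ uniformly in $n$: conditioning on the velocity at time $ns$ reduces the increment in law to an initial-value telegraph evaluation over length $n(t-s)$, so the fourth-moment estimate $E[X(u)^4]=O(u^2)$ combined with the normalization $\sigma_n^4 n^2 = \lambda^2/c^4$ delivers the bound. Alternatively one may directly invoke the functional central limit theorem for additive functionals of ergodic finite-state Markov processes (see, e.g., \cite{pinsky} and the random-flight references already cited in the Introduction), which furnishes both finite-dimensional convergence and tightness simultaneously.
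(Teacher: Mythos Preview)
Your reduction via the continuous map $\Phi(f)=ze^{if}$ is exactly what the paper does: both parts are deduced from the corresponding statements for the linear telegraph process $X$ (namely $\|X(t)\|=o(t^{1/p})$ a.s.\ and the functional CLT $\sigma_n X(n\cdot)\Longrightarrow B$ in $C([0,1],\mathbb R)$), followed by the continuous mapping theorem. The paper, however, does not prove those two inputs at all---it simply invokes Theorem~1.4 and Theorem~1.2 of \cite{hor} (or the analogous results in \cite{ghosh}) for directionally reinforced random walks, which cover the telegraph process as a special case.

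What you do differently is supply self-contained proofs of those inputs: the SLLN via the moment bound $\es[X(t)^{2k}]=O(t^k)$, Chebyshev, Borel--Cantelli along integers, and Lipschitz interpolation; and the invariance principle via the Kac-regime identification $c_n^2/\lambda_n=1$, characteristic-function convergence for the marginals (extended to finite-dimensional distributions by the Markov property), and Kolmogorov's fourth-moment tightness criterion using $\sigma_n^4\,\es[X(n(t-s))^4]\le C(t-s)^2$. These arguments are correct (for the tightness step it is worth noting that the small-time regime $n(t-s)<1$ is handled by the deterministic bound $|X(u)|\le cu$, which you implicitly use). Your route is longer but has the advantage of being self-contained and of making transparent exactly which features of the telegraph process---bounded velocity, Gaussian-order moments, the Kac scaling---drive the limit theorems; the paper's route is shorter but relies on the reader accepting the cited random-flight limit theorems as black boxes.
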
 
	\begin{proof}
		The points (i) immediately follows from Theorem 1.4 in \cite{hor} or Theorem 2.1 in \cite{ghosh} which state that $||X(t)||=o(t^{1/p})$. The result (ii) is an application of Theorem 1.2 \cite{hor} or Theorem 2.3 in \cite{ghosh} and of the continuous mapping theorem.
		In fact, if 
		\[
		X_n(t) :=\sigma_nX(nt), \quad t\in[0,1],
		\] 
		one has that $ X_n$ weakly converges to $B$ in  $C([0,1], \mathbb R)$ and by the continuous mapping theorem, we get 
		\[
		Y_n(t) =ze^{i X_n(t)}  \Longrightarrow z\mathfrak B(t), \quad  t \in [0,1].
		\]\end{proof}
	
	We observe that the density function of the circular Brownian motion $\mathfrak B(t)$, for a fixed $t>0,$ is given by
	\begin{align*}
	\nu(\theta,t)=\frac{1}{\sqrt{2\pi t}}\sum_{k =-\infty}^\infty e^{(\theta+2k\pi)^2/2t},\quad \theta\in[0,2\pi),
	\end{align*}
	which has the following representation in terms of Fourier series
	\begin{align*}
	\nu(\theta,t)=\frac{1}{2\pi }\left(1+2\sum_{k=1}^\infty e^{-k^2 t/2}\cos (k\theta)\right),\quad \theta\in[0,2\pi),
	\end{align*}
	see, e.g., \cite{steph}, \cite{hart} and \cite{toaldo}. 	Moreover 
	$$W_t f(z)=\mathds E f(z\mathfrak B(t))=\frac{1}{\sqrt{2\pi t}}\int_{-\infty}^{+\infty} f(ze^{i x})e^{-\frac{x^2}{2t}}\de x,$$
	where $f\in A(\mathbb D),$ is a $C_0$-semigroup of linear operators on $A(\mathbb D)$ and  represents the unique solution of the Cauchy problem
	\begin{equation}\label{eq:heatcirc}
	\begin{cases}
	\frac{\partial u}{\partial t}(z,t)=\frac12 \frac{\partial^2 u}{\partial \theta}(z,t),& (z,t)\in \mathbb D\setminus \{0\}\times [0,\infty),\\
	u(z,0)=f(z),& z\in \overline{ \mathbb D}.
	\end{cases}
	\end{equation}
	\begin{remark}
		For the Brownian motion on a circle $\mathfrak B$ the following properties hold:
		\begin{itemize}
			\item for $t>t',$ we have
			\begin{align*}
			\mathds{E}[\mathfrak B(t)^\alpha\mathfrak B(t')^\beta]
			&=\mathds{E}[e^{i\alpha(B(t)-B(t')+i(\alpha+\beta) B(t')}]\\
			&=e^{-\frac{\alpha^2(t-t')^2+(\alpha+\beta)^2t'}{2}}.
			\end{align*}
			Therefore
			\begin{align*}
			\mathds{E}[\mathfrak B(t)^\alpha\mathfrak B(t')^\beta]
			=\exp\left\{-\frac{[\omega(t-t')]^2+(\alpha+\beta)^2\min\{t,t'\}}{2}\right\},
			\end{align*}
			where $\omega  = \alpha \, \mathds{1}_{t > t'} + \beta \,\mathds{1}_{t \leq t'}, \, \alpha, \beta \in \mathbb{R};$
			\item the circular Brownian motion process  is not a martingale w.r.t. $\mathcal G_t:=\sigma(B(s),s\leq t);$ i.e.
			\begin{align*}
			\mathds{E}[\mathfrak B(t)|\mathcal G_s]&=\mathds{E}[\mathfrak B(t)|\mathcal G_s]\\
			&=\mathds{E}[e^{i(B(t)-B(s))}e^{i B(s)}|\mathcal G_s]\\
			&=\mathfrak B(s) e^{-\frac{(t-s)^2}{2}}.
			\end{align*}
		\end{itemize}
	\end{remark}
	\begin{remark}
		Under the Kac conditions; i.e. $\lambda, c\to\infty$ such that $c^2/\lambda\to 1,$ we have immediately the complex telegraph equation \eqref{eq:telpde} tends to the heat equation in \eqref{eq:heatcirc}
		%$$ \frac{\partial u}{\partial t}(\theta,t)= \frac{\partial^2 u}{\partial \theta^2}(\theta,t),$$
		which admits as solution $W_t f(z),$ or, equivalently, under the Kac conditions, the solution of \eqref{eq:telpde} tends to $W_t f(z);$ i.e. $\mathds E f(z e^{i X(t)})\longrightarrow \mathds E f(z \mathfrak B(t))$ (see also Theorem 2.6 in \cite{kolrat}).
	\end{remark}

	\section{Random harmonic oscillators}
	
	Let $Z_1(t):=\cos X(t),$ with $Z_1(0)=1,$ and $Z_2(t):= \sin X(t),$ with $Z_2(0)=0.$ We observe that $\{Z_1(t),\,t\geq 0\}$ and $\{Z_2(t),\,t\geq 0\}$ represent the projections of the wrapped up telegraph process $Z(t)$ on the $x$-axis and $y$-axis, respectively. They can be interpreted as models for a randomized version of a harmonic oscillator. In fact, the vertical component of the wrapped telegraph process $ Z_2(t)$
	coincides with the dynamics of a harmonic oscillator experiencing random collisions occurring at Poisson times, and $ Z_1(t)$ with its random velocity. At each Poisson event the direction of the motion is reversed, preserving the modulus of the velocity. That is, if the oscillator is at position $Z_2(t)$ with velocity $ \dot Z_2(t)=V(t) \cos X(t)$ and 
	a Poisson event occurs, then the oscillator continues its motion from position $Z_2(t)$ with new velocity $ -\dot Z_2(t)$.
	A sample path of $ Z_1(t) $ and $ Z_2(t) $ are shown in \autoref{fig:circle-path}.

Let $p(\de x,t)$ and $\mu(x,t)$ given by  \eqref{eq:lawtel}  and \eqref{eq:acpart}, respectively.	Let $\mathds F_X(x,t):=\int_{-\infty}^x p(\de y,t)$ be the cumulative distribution function of $X(t)$ for each $t\geq 0,$ We are able to derive the probability laws of  $Z_1(t)$ and $Z_2(t), t\geq 0.$
	
	%In order to present the next result,  we need to introduce the following functions. We deal with $$\arccos_{2k}:[-1,1]\to [2k\pi,(2k+1)\pi],\quad \arccos_{2k+1}:[-1,1]\to [(2k+1)\pi,(2k+2)\pi]$$ that identify the inverse maps of the cosine functions with domains restricted to $[2k\pi,(2k+1)\pi]$ and $ [(2k+1)\pi,(2k+2)\pi],$ respectively. Analogously, $$\arcsin_{2k}:[-1,1]\to [2k\pi,(2k+1)]\pi,\quad\arcsin_{2k+1}:[-1,1]\to [(2k+1)\pi,(2k+2)\pi]$$ stand for inverse maps of the sine functions with domains restricted to $[2k\pi,(2k+1)\pi]$ and $ [(2k+1)\pi,(2k+2)\pi],$ respectively. 
	\begin{theorem}
		1) Let $t\geq 0,$
		the cumulative distribution function of $Z_1(t)$ is given by
		\begin{align}\label{eq:cdfz1}
		\mathds P(Z_1(t)\leq x)=\sum_{k=-\infty}^{\infty}\left[\mathds F_X((2k+2)\pi- \arccos x,t)-\mathds F_X(\arccos x+ 2k\pi,t)\right],\quad |x|\leq 1.
		\end{align}
		Furthermore, 
		\begin{align*}
		\mathds P(Z_1(t)\in \de x)=e^{-\lambda t}
		\delta_{\cos ct}(\de x)+\nu(x,t)\mathds1_{|x|<1}\de x,
		\end{align*}
		where $\nu(x,t)\mathds1_{|x|<1}$ represents the density function of the absolutely continuous part of $Z_1(t)$ and reads
		\begin{align}\label{eq:densz1}
		\nu(x,t)&=\frac{1}{\sqrt{1-x^2}}\sum_{k=-\infty}^{\infty}[\mu((2k+2)\pi- \arccos x,t)\,\mathds 1_{|(2k+2)\pi- \arccos x|<ct}\\
		&\quad+\mu(\arccos x+2k\pi,t)\,\mathds1_{| \arccos x+2k\pi|<ct}].\notag
		\end{align}
		
		2) Let $t\geq 0,$ the cumulative distribution function of $Z_2(t)$ becomes
		\begin{align}\label{eq:cdfz2}
		\mathds P(Z_2(t)\leq x)&=\sum_{k=-\infty}^{\infty}\left[\mathds F_X( \arcsin x+2k\pi,t)-\mathds F_X((2k-1/2)\pi,t)\right.\\
		&\quad\left.+\mathds F_X( (2k+3/2)\pi,t)-\mathds F_X( (2k+1)\pi-\arcsin x,t)\right],\quad |x|\leq 1.\notag
		\end{align}
		Furthermore,
		\begin{align}\label{eq:cdfz1}
		\mathds  P(Z_2(t)\in\de x)=\frac12e^{-\lambda t}[\delta_{\sin ct}(\de x)+\delta_{-\sin ct}(\de x)]+ \eta(x,t)\mathds 1_{|x|<1}\de x,
		\end{align}
		where $\eta(x,t)\mathds1_{|x|<1}$ is he density function of the absolutely continuous part of $Z_2(t),$ that is 
		\begin{align}\label{eq:densz2}
		\eta(x,t)&=\frac{1}{\sqrt{1-x^2}}\sum_{k=-\infty}^{\infty}[\mu(\arcsin x+2k\pi,t)\,\mathds1_{|\arcsin x+2k\pi|<ct}\\
		&\quad+\mu( (2k+1)\pi-\arcsin x,t)\,\mathds 1_{| (2k+1)\pi-\arcsin x|<ct}].\notag
		\end{align}
		
	\end{theorem}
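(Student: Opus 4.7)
The plan is to compute, for each of $Z_1(t)=\cos X(t)$ and $Z_2(t)=\sin X(t)$, the push-forward of the law of $X(t)$ (given in \eqref{eq:lawtel}) under the periodic maps $\cos$ and $\sin$, and then read off the singular and absolutely continuous parts separately.

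For part (1), I would start from the identity
\[
\{\cos X(t)\le x\}=\bigcup_{k\in\mathbb Z}\bigl\{X(t)\in[\arccos x+2k\pi,\,(2k+2)\pi-\arccos x]\bigr\},\qquad |x|\le 1,
\]
which follows from the fact that, on each period $[2k\pi,(2k+2)\pi)$, the cosine drops below $x$ exactly on the symmetric arc around $(2k+1)\pi$. Since the intervals on the right are pairwise disjoint, taking probabilities and using the notation $\mathds F_X(\cdot,t)$ gives \eqref{eq:cdfz1} directly. For the density I would split $p(\mathrm d y,t)$ into its atomic part $\tfrac12 e^{-\lambda t}[\delta_{ct}+\delta_{-ct}]$ and its absolutely continuous part $\mu(y,t)\mathds 1_{|y|<ct}$. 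Because $\cos(\pm ct)=\cos(ct)$, the two atoms collapse to a single atom of mass $e^{-\lambda t}$ at $\cos(ct)$. For the a.c.\ part, I would differentiate \eqref{eq:cdfz1} in $x$ term-by-term, using $\tfrac{d}{dx}\arccos x=-1/\sqrt{1-x^2}$; the two contributions from each $k$ carry the same sign after the chain rule, yielding the formula \eqref{eq:densz1}. Convergence of the interchange of sum and derivative is not an issue: for fixed $t$, $\mu(\cdot,t)$ is supported in $[-ct,ct]$, so only finitely many $k$ contribute.

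For part (2), the only real change is that $\sin$ has two monotone branches per period. On $[2k\pi-\pi/2,\,2k\pi+\pi/2]$ it is increasing, so $\sin\theta\le x$ iff $\theta\in(2k\pi-\pi/2,\,2k\pi+\arcsin x]$; on $[2k\pi+\pi/2,\,2k\pi+3\pi/2]$ it is decreasing, so $\sin\theta\le x$ iff $\theta\in[(2k+1)\pi-\arcsin x,\,(2k+3/2)\pi]$. Summing the $\mathds F_X$-probabilities of these two families over $k$ gives precisely \eqref{eq:cdfz2}. For the atomic part, now $\sin(ct)\ne \sin(-ct)=-\sin(ct)$ in general, so the two atoms at $\pm ct$ of $X(t)$ produce two distinct atoms at $\pm\sin(ct)$, each of mass $\tfrac12 e^{-\lambda t}$, matching \eqref{eq:cdfz1}. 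Differentiating the a.c.\ part of \eqref{eq:cdfz2} in $x$ with $\tfrac{d}{dx}\arcsin x=1/\sqrt{1-x^2}$ yields \eqref{eq:densz2}.

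The arguments are essentially bookkeeping, and no single step is analytically deep. The main care needed is in organizing the branches of $\cos^{-1}$ and $\sin^{-1}$ consistently across all shifts by $2k\pi$, making sure signs from the chain rule combine constructively (rather than cancel), and keeping the indicator constraints $|\arccos x+2k\pi|<ct$, etc., coherent with the support of $\mu(\cdot,t)$. Once this is set up, the rest follows from the known expressions for the law of $X(t)$ recalled in Section~2.
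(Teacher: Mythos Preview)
Your proposal is correct and follows essentially the same route as the paper: partition $\mathbb R$ into the monotone branches of $\cos$ (resp.\ $\sin$), invert on each to obtain the CDF as a sum of $\mathds F_X$-increments, identify the atoms via $\cos(\pm ct)=\cos ct$ (resp.\ $\sin(\pm ct)=\pm\sin ct$), and differentiate the absolutely continuous part term-by-term using the chain rule. Your remark that only finitely many $k$ contribute because $\mu(\cdot,t)$ is compactly supported is a clean justification for the term-by-term differentiation that the paper leaves implicit.
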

	\begin{proof}
		1) Let us fix $|x|< 1.$ We have 
		\begin{align*}
		\mathds P(Z_1(t)\leq x)&=\sum_{k=-\infty}^{\infty}P(\cos X(t)\leq x, 2k\pi\leq X(t)\leq (2k+1)\pi)\\
		&\quad+\sum_{k=-\infty}^{\infty}\mathds P(\cos X(t)\leq x, (2k+1)\pi\leq X(t)\leq (2k+2)\pi)\\
		&=\sum_{k=-\infty}^{\infty}\mathds P( X(t)\geq \arccos x+2k\pi , 2k\pi\leq X(t)\leq (2k+1)\pi)\\
		&\quad+\sum_{k=-\infty}^{\infty}\mathds P( X(t)\leq(2k+2)\pi- \arccos x, (2k+1)\pi\leq X(t)\leq (2k+2)\pi)\\
		&=\sum_{k=-\infty}^{\infty}[\mathds P\left(\arccos x+ 2k\pi\leq X(t)\leq (2k+1)\pi\right)\\
		&\quad+\mathds P\left((2k+1)\pi\leq X(t)\leq(2k+2)\pi- \arccos x\right)]\\
		&=\sum_{k=-\infty}^{\infty}\mathds P\left(\arccos x+ 2k\pi\leq X(t)\leq(2k+2)\pi- \arccos x\right),
		\end{align*}
		which leads to \eqref{eq:cdfz1}.
		
		Clearly, if $N(t)=0$, $Z_1(t)=\cos (\pm ct)=\cos ct$ with probability $e^{-\lambda t},$ which leads to the singular component of the distribution of $Z_1(t),$ that is $\frac12e^{-\lambda t}\delta_{\cos ct}(x).$
		We observe that the absolutely continuous part of the cumulative distribution function \eqref{eq:cdfz1} is given by 
		$$\int_{-\infty}^x \nu(y,t)\mathds1_{|y|<1}\de y=\sum_{k=-\infty}^{\infty}\int_{\arccos x+2k\pi}^{(2k+2)\pi- \arccos x}\mu(y,t)\mathds1_{|y|<ct}\de y,$$
		from which we derive \eqref{eq:densz1}.

		2) For the process $\{Z_2(t),\,t\geq 0\},$ we can obtained \eqref{eq:cdfz2} as follows
		\begin{align*}
		\mathds P(Z_2(t)\leq x)&=\sum_{k=-\infty}^{\infty}\mathds P\left(\sin X(t)\leq x,(2k-1/2)\pi\leq X(t)\leq (2k+1/2)\pi\right) \\
		&\quad+\sum_{k=-\infty}^{\infty}\mathds P\left(\sin X(t)\leq x, (2k+1/2)\pi\leq X(t)\leq (2k+3/2)\pi\right) \\
		&=\sum_{k=-\infty}^{\infty}\mathds P\left( X(t)\leq \arcsin x +2k\pi,(2k-1/2)\pi\leq X(t)\leq (2k+1/2)\pi\right) \\
		&\quad+\sum_{k=-\infty}^{\infty}\mathds P\left( X(t)\geq (2k+1/2)\pi+\pi/2-\arcsin x, (2k+1/2)\pi\leq X(t)\leq (2k+3/2)\pi\right) \\
		&=\sum_{k=-\infty}^{\infty}\left[\mathds P\left((2k-1/2)\pi\leq X(t)\leq \arcsin x+2k\pi\right)\right.\\
		&\quad+\left.\mathds P\left((2k+1)\pi-\arcsin x\leq  X(t)\leq  (2k+3/2)\pi\right)\right],
		%&=\sum_{k\in \mathbb Z}\int_{2k\pi}^{\min\{\arcsin x, (2k+1)\pi\}}\mu(x,t)\de x+ \sum_{k\in \mathbb Z}\int_{\max\{\arcsin x, (2k+1)\pi\}}^{(2k+2)\pi}\mu(x,t)\de x.
		\end{align*}
		where $|x|<1.$ 
		
		We have that $\mathds P(Z_2(t)=\sin(\pm ct))=\frac12\mathds P(N(t)=0)=\frac12e^{-\lambda t},$ which leads to the singular component of the distribution of $Z_2(t),$ namely $\frac12e^{-\lambda t}[\delta_{\sin ct}(x)+\delta_{-\sin ct}(x)]$ We observe that the absolutely continuous part of the cumulative distribution function \eqref{eq:cdfz2} is equal to
		\begin{align*}
		\int_{-\infty}^x \eta(y,t)\mathds1_{|y|<1}\de y&=\sum_{k=-\infty}^{\infty}\left[\int_{(2k-1/2)\pi}^{\arcsin x+2k\pi}\mu(y,t)\de y+\int_{(2k+1)\pi-\arcsin x}^{ (2k+3/2)\pi}\mu(y,t)\de y\right],
		%&=\sum_{k\in\mathbb Z}\left[\int_{-ct}^{\arcsin x+2k\pi]}\mu_{\text{ac}}(y,t)\de y-\int_{-ct}^{(2k-1/2)\pi}\mu_{\text{ac}}(y,t)\de y\right]\\
		\end{align*}
		and then \eqref{eq:densz2} holds.
	\end{proof}

	Starting from the aforementioned weak convergence result of the telegraph process to the Brownian motion it is possible to derive an asymptotic diffusion approximation for the dynamics of the random harmonic oscillator here considered. Let \[  Z_{1,n}(t) =  \cos ( X_n(t)) ,\,\, Z_{2,n}(t) =  \sin ( X_n(t)) ,  \qquad n \in \mathbb N, \, t \in [0,1].\]
	
	\begin{proposition}
		We have that 
		\begin{enumerate}[(i)]
			\item (weak convergence) 
			\[ Z_{1,n} \Longrightarrow \mathcal V  ,\,\, Z_{2,n} \Longrightarrow \mathcal Y \]
			where $\mathcal V(t) = \cos B(t)$ and $\mathcal Y(t) = \sin B(t), t \in [0,1]$.
			\item (diffusion approximation) The limiting process $ \{ (\mathcal V(t), \mathcal Y(t)), \,t\geq 0 \} $  satisfies the following 
			system of SDEs
			\begin{equation}\label{eq:diff-approx}
			\begin{cases}
			\mathrm d \mathcal Y(t) &= -\frac 12 \mathcal Y(t) \mathrm d t + \mathcal V(t) \mathrm d B(t) \\
			\mathrm d \mathcal V(t) &= -\frac 12 \mathcal V(t) \mathrm d t - \mathcal Y(t) \mathrm d B(t)
			\end{cases}
			\end{equation}
			with $\mathcal Y(0)=0$, $\mathcal V(0)=1$.
		\end{enumerate}
	\end{proposition}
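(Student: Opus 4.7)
The plan is to obtain (i) from the continuous mapping theorem applied to the weak convergence of $X_n$ already established in Theorem \ref{th:conv}(ii), and to obtain (ii) directly from It\^o's formula applied to $B$. Neither part poses a substantial obstacle.

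For (i), I will use the fact that the proof of Theorem \ref{th:conv}(ii) furnishes $X_n = \sigma_n X(n\cdot) \Longrightarrow B$ in $C([0,1],\mathbb R)$ as an intermediate step (this is the invariance principle for the telegraph process invoked there, before composing with $z\mapsto ze^{ix}$). Since $\cos$ and $\sin$ are $1$-Lipschitz on $\mathbb R$, the superposition maps $\phi \mapsto \cos\circ\phi$ and $\phi \mapsto \sin\circ\phi$ are continuous from $(C([0,1],\mathbb R),\|\cdot\|_\infty)$ into itself, because $\|\cos\circ\phi - \cos\circ\psi\|_\infty \le \|\phi-\psi\|_\infty$ and similarly for $\sin$. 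The continuous mapping theorem then yields $Z_{1,n} = \cos X_n \Longrightarrow \cos B = \mathcal V$ and $Z_{2,n} = \sin X_n \Longrightarrow \sin B = \mathcal Y$.

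For (ii), I will apply It\^o's formula to $B$. Taking $f(x)=\sin x$ gives $f'(x)=\cos x$ and $f''(x)=-\sin x$, so
\[
\mathrm d \mathcal Y(t) = \cos B(t)\, \mathrm d B(t) - \tfrac12 \sin B(t)\, \mathrm d t = \mathcal V(t)\, \mathrm d B(t) - \tfrac12 \mathcal Y(t)\, \mathrm d t.
\]
Taking $g(x)=\cos x$ yields $g'(x)=-\sin x$ and $g''(x)=-\cos x$, whence
\[
\mathrm d \mathcal V(t) = -\sin B(t)\, \mathrm d B(t) - \tfrac12 \cos B(t)\, \mathrm d t = -\mathcal Y(t)\, \mathrm d B(t) - \tfrac12 \mathcal V(t)\, \mathrm d t.
\]
The initial conditions $\mathcal Y(0)=\sin B(0)=0$ and $\mathcal V(0)=\cos B(0)=1$ follow from $B(0)=0$.

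The only mildly delicate point is ensuring the continuous mapping theorem applies in path space, but as noted above the Lipschitz property of $\sin$ and $\cos$ trivializes this. Overall the proposition reduces to a short corollary of Theorem \ref{th:conv}(ii) coupled with a one-line It\^o calculation.
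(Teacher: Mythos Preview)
Your proof is correct and follows essentially the same approach as the paper: the paper proves (i) by invoking the invariance principle for $X_n$ (Theorem 1.2 in \cite{hor} or Theorem 2.3 in \cite{ghosh}) together with the continuous mapping theorem, and obtains (ii) as a direct application of It\^o's Lemma. Your write-up is in fact more explicit on both points, spelling out why the superposition maps are continuous and carrying out the It\^o computation in full.
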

	\begin{proof}
		The weak convergence result is again an application of Theorem 1.2 \cite{hor} or Theorem 2.3 in \cite{ghosh} and of the continuous mapping theorem. Point\emph{(ii)} is a straightforward 	application of It\^o's Lemma.
	\end{proof}

	\section{Some generalizations of the wrapped telegraph process}

	\subsection{Asymmetric circular telegraph process}
	
	The asymmetric telegraph process has been considered in \cite{beghin}. 
	It is a telegraph process where the particle is allowed to move forward or backward with two different 
	velocities, $ c_1, \, c_2 $. Moreover the process is allowed to have two different velocity switching rates $ \lambda_1 $ and $ \lambda_2 $. Thus the underlying telegraph signal is modeled as a continuous time Markov chain $ \{ V(t),\,t\geq 0\} $ with state space $ \{c_1, -c_2\} $, where
	$ \lambda_i $ is the rate of the exponential waiting time when the telegraph signal is in state $ (-1)^{i+1}c_i, \, i=1,2$, and $ V(0) $ is uniformly distributed on $ \{c_1, -c_2\} $. The asymmetric telegraph process is then defined as $ X(t) = \int_0^t V(s) \de s $. Clearly the classical telegraph process is recovered as a particular case when
	$ c_1 = c_2 = c, \lambda_1 = \lambda_2= \lambda $. 
	
	The asymmetric process admits the representation
	
	\begin{equation}\label{eq:asym-tel-def}
	X(t)  = 
	\int_0^t \left\{ 
	\frac{c_1 - c_2}{2} + \frac{c_1 + c_2}{2} 
	\left[
	\mathds 1_{\{V(0)=c_1\}} - 
	\mathds 1_{\{V(0)=-c_2\}} 
	\right]
	(-1)^{N(s)}
	\right\}
	\de s
	\end{equation}
	where  $ \{N(t), t\geq 0\} $ is defined as the counting process $ N(t) = \min \{ n\in \mathbb N: \sum_{j=1}^n S_j > t\} $ and the random times $ \{S_n, n \in \mathbb N\} $ are conditionally independent given $ V(0) $, 
	with the following conditional law, under $ V(0) = (-1)^{i+1} c_i $:
	
	\begin{align*}
	&\{S_{2k-1}, k\in \mathbb N\} \text{ are exponentially distributed with mean } \frac{1}{\lambda_i} \\
	&\{S_{2k}, k\in \mathbb N\}\text{ are exponentially distributed with mean } \frac{1}{\lambda_{i + (-1)^{i+1}}}
	\end{align*}
	$ i = 1,2 $.
	
	The distribution of $ X(t) $ can be obtained by means of relativistic space-time tranformations (\cite{beghin}) and is given by 
	\begin{align}\label{eq:asy-tel-dist}
	p(\de x, t) = 
	\frac 12 e^{-\lambda_1t} \delta_{-c_2 t}(\de x)  + 
	\frac 12 e^{-\lambda_2t} \delta_{c_1 t}(\de x) + 
	\mu(x, t) \mathds 1_{-c_2 t < x < c_1t} \, \de x
	\end{align}
	where the density $ \mu $ of the absolutely continuous component of $ p $ reads
	\begin{align}\label{eq:asy-tel-ac}
	\mu(x,t) & = \frac{1}{c_1+c_2} 
	e^{- \frac 12 (\lambda_1 + \lambda_2) t +
		\frac{\lambda_2 - \lambda_1 }{c_2 + c_1}x +
		\frac{(\lambda_2 - \lambda_1)(c_2 - c_1)}{2(c_2 + c_1) } t    } 
	\\
	& \notag 
	\times \Bigg[
	\frac{\lambda_1 + \lambda_2 }{2} 
	I_0 \left(  2 \frac{\sqrt{\lambda_1 \lambda_2}}{c_2 + c_1}
	\sqrt{(x+c_2 t) (c_1 t - x)} \right) 
	+ 
	\frac{\partial}{\partial t} 
	I_0 \left(  2 \frac{\sqrt{\lambda_1 \lambda_2}}{c_2 + c_1}
	\sqrt{(x+c_2 t) (c_1 t - x)} \right)   \\
	& 
	\quad - \frac{(c_2 - c_1)}{2} 
	\frac{\partial}{\partial x} I_0 \left(  2 \frac{\sqrt{\lambda_1 \lambda_2}}{c_2 + c_1}
	\sqrt{(x+c_2 t) (c_1 t - x)} \right) 
	\Bigg]. \notag 
	\end{align}

	Let $Z(t) = ze^{iX(t)} \,, t \geq 0$
	where $ X $ now denotes the asymmetric process \eqref{eq:asym-tel-def}. The process $ \{Z(t), \, t\geq 0\} $ then defines the \textit{asymmetric} circular telegraph process with starting point $ Z(0) = z \in \mathbb D $.
	It describes the motion of a particle on the unit circle with two different angular velocities. The particle rotates counter-clockwise with constant angular speed $ c_1 $ and clockwise with angular speed of modulus $ c_2 $.
	Moreover in this model allows the particle to exhibit a preferential direction of rotation: it rotates 
	counter-clockwise and clockwise, on average, for a time period of $ 1/\lambda_1 $ and $ 1/\lambda_2  $ , respectively, before each velocity flip. The velocity process is described by the continuous time Markov chain $\{ V(t), t \geq 0\} $ defined above.
	The probability distribution of $ Z(t) $ can be easily obtained by wrapping the distribution \eqref{eq:asy-tel-dist}
	over the unit circle, again identified with the interval $ [0,2 \pi) $. In fact we have
	\begin{align}
	& p_Z(\de \theta, t)  \\ 
	&= 
	\sum_{k=-\infty }^\infty
	\frac12 [e^{-\lambda_1 t}\delta_{- c_1t - 2 k \pi}(\de \theta)+ e^{-\lambda_2 t} \delta_{-c_2t - 2 k \pi}(\de \theta)]+
	\sum_{k=-\infty }^\infty
	\mu(\theta + 2 k \pi,t) \mathds{1}_{-c_2 t < \theta + 2 k \pi <c_1t}\,\de \theta \notag 
	\end{align}

	By a slight modification of the argument of Theorem 2 it is immediate to check that 
	the infinitesimal generator  $\mathfrak L$ of $\{(Z(t),V(t)), t\geq 0\}$ in the \textit{asymmetric case} is given by 
	\begin{equation}\label{eq:asymm-gen}
	\left\{
	\begin{aligned}
	\mathfrak Lf(z,c_1) & =iz c_1 f'(z,v)+ \lambda_1f(z,-c_2)- \lambda_1 f(z,c_1) \\
	\mathfrak Lf(z,c_2) & =iz c_2 f'(z,v)+ \lambda_2f(z, c_1)- \lambda_2 f(z,-c_2)
	\end{aligned}
	\right.
	\end{equation}
	with Dom$(\mathfrak L)=\{f\in B(\mathbb G):z\mapsto f(z, v)\in A(\mathbb D)\}$, where in this case $ \mathbb G = \mathbb D \times \{c_1, -c_2\}  $.
	The operator in  \eqref{eq:asymm-gen} can be equivalently expressed in compact matrix form as
	\begin{equation}
	\mathfrak L  = 
	i z 
	\begin{pmatrix}
	c_1 \frac{\de }{\de z} & 0 \\
	0 & c_2 \frac{\de }{\de z}
	\end{pmatrix} + 
	\begin{pmatrix}
	-\lambda_1 & \lambda_1 \\
	\lambda_2 & -\lambda_2
	\end{pmatrix} 
	\end{equation}
	with the identification of $ \text{Dom}(\mathfrak L) $ with  $ A(\mathbb D) \times A(\mathbb D) $.
	
	The absolutely continuous component $ \mu $ of the probability law of the asymmetric telegraph process
	can satisfies the following hyperbolic equation (\cite{beghin})
	
	\begin{align}\label{eq:gov-asym}
	\frac{\partial^2 \mu }{\partial t^2} &= 
	c_1 c_2 \frac{\partial^2 \mu }{\partial x^2} 
	+ (c_2 -c_1) \frac{\partial^2 \mu }{\partial t \partial x}
	- (\lambda_1 + \lambda_2) 	\frac{\partial \mu }{\partial t}  \\
	& \quad 
	+ \frac 12 \left[
	(c_2 - c_1) (\lambda_1 + \lambda_2) - (\lambda_2 - \lambda_1)(c_1 + c_2)
	\right]
	\frac{\partial \mu }{\partial x}. \notag
	\end{align}
	
%	
%	Intuitively it is possible to see that by taking the limit in \eqref{eq:gov-asym} 
%	under Kac-type conditions, one obtains the governing equation of a Brownian motion with drift.
%	In fact, by taking the limits for $ \lambda_i, c_i \to \infty $ in such a way that 
%	\[
%	\frac{c_1c_2}{\frac{\lambda_1 + \lambda_2}{2} } \to 1, \quad 
%	\lambda_2 - \lambda_1 \to \alpha \in \mathbb R, \quad 
%	c_2 - c_1 \to \beta \in \mathbb R
%	\]
%	\eqref{eq:gov-asym} becomes
%	\begin{equation}
%	\frac{\partial \mu }{\partial t} = 
%	\frac 12 \frac{\partial^2 \mu }{\partial x^2} + 
%	\beta \frac{\partial \mu }{\partial x} 
%	\end{equation}
%	Examples of such sequences may be 
%	\[
%	\lambda_1 = M^2, \quad 
%	\lambda_2 = \sqrt{M^4 - 2\alpha M^2}, \quad 
%	c_1 = M, \quad 
%	c_2 = \sqrt{M^2 -2\beta M} 
%	\]
%	as $ M \to \infty $.
%	

	It is possible to see that by taking the limit in \eqref{eq:gov-asym} 
		under Kac-type conditions, one obtains the governing equation of a Brownian motion with drift.
		In fact by taking the limits for $ \lambda_i, c_i \to \infty $ in such a way that 
	
	\begin{equation}\label{eq:asym-cond}
		\frac{\lambda_1}{\lambda_2} \to  \nu^2 >0 , \quad 
		\frac{c_i}{\sqrt{\lambda_i}} \to \sigma_i , \, i=1,2 , \quad
		\frac{\lambda_2 c_1 - \lambda_1 c_2}{\lambda_1 + \lambda_2 } \to \delta  
	\end{equation}
	it is possible to show that the marginal distributions of the asymmetric telegraph process converges to a 
	drifted Brownian motion
	\begin{equation}\label{eq:asym-conv}
		X(t) \xrightarrow{d} \sigma W(t) + \delta t , \quad t>0
	\end{equation}
	where $ W(t) $ denoted a standard Brownian motion and
	\[
	\sigma = \frac{\sigma_1 \sigma_2}{\sqrt{(\sigma_1^2 + \sigma_2^2)/2}} \, .
	\]
	
	By applying the continuous mapping theorem it follows immediately that the marginal distributions of the asymmetric circular telegraph process, under the conditions \eqref{eq:asym-cond}, converge to those of a circular Brownian motion with drift, i.e.
	\begin{equation}
		Z(t) \xrightarrow{d} ze^{i\delta t} [\mathfrak B(t)]^\sigma \,,\quad t > 0
	\end{equation}
	where $ \mathfrak B $ denoted a standard circular Brownian motion.
	%\subsection{State-dependent motion on the circle}
	
	\subsection{Circular motion with heay-tailed intertimes}
	Let us consider another type of telegraph random evolution on the real line. We define
	\begin{align}
	\mathfrak X(t):=V_0\sum_{k=1}^{M_t-1}(-1)^{k-1} D_k+(t-T_{M_t})V_0 (-1)^{M_t-1}
	\end{align}
	where $\{M_t,\,t\geq 0\}$ is a counting process defined as follows 
	$$M_t:=\inf\{k\geq0: T_k>t\}$$ We assume that $D_1$ is in the domain of attraction of a stable law with index $\alpha, 0<\alpha<2,\alpha\neq 1$ ($\mathds E(D_1^2)=\infty$);  i.e.
	$$\mathds P(D_1>x)=x^{-\alpha} h(x),\quad x\geq 1,$$  
	where $h(x)$ is a slowly varying function at $\infty.$ Therefore $\{\mathfrak X(t),t\geq 0\}$ represents a telegraph motion with  heay-tailed intertimes.

	 Let $\{S_\alpha^{(1)}(t),\,t\geq 0\}$ and $\{S_\alpha^{(2)}(t),\,t\geq 0\}$ be two independent, identically distributed completely asymmetric stable processes; i.e. 
	$${\mathds  E}(e^{iu S_\alpha^{(j)}(t)})=\exp\left(-t|u|^\alpha\left(1-i\, \sgn(u)\tan\left(\frac{\pi\alpha}{2}\right)\right)\right),\quad u\in\mathbb R,\quad  j=1,2.$$
	By setting $S_\alpha(t):=S_\alpha^{(1)}(t/2)-S_\alpha^{(2)}(t/2)$ and $U_\alpha(t):=S_\alpha^{(1)}(t/2)+S_\alpha^{(2)}(t/2).$ We observe that $\{S_\alpha(t),\,t\geq 0\}$ represents a symmetric stable process with
	$${\mathds  E}(e^{iu S_\alpha(t)})=\exp\left(-t|u|^\alpha\right)$$
	and $\{U_\alpha(t),\,t\geq 0\}$ is an asymmetric stable process; that is $U_\alpha(t)\stackrel{\text{law}}{=}S_\alpha^{(j)}(t).$
	Furthermore we define the inverse process $\{L_\alpha (t),t\geq 0\}$ of $U_\alpha(t)$ as follows
	$$L_\alpha(t):=\inf\{s: U_\alpha(s)>t\}.$$
	%$$U_\alpha^{-1}(t):=\inf\{s: U_\alpha(s)>t\}.$$
	Now, we introduce the wrapped up telegraph process $\{W(t), t\geq 0\}$ on $\mathbb S$ (starting at $W(0)=1$) as follows
	\begin{equation}
	W(t):=e^{i \mathfrak X (t)}.
	\end{equation}
		
		The next theorem regards the asymptotic diffusion regime for $\{\mathfrak X(t),t\geq 0\}.$
	\begin{theorem}
		We have that
		\begin{equation}
		(W(nt))^{1/n}\Longrightarrow \mathfrak S_\alpha(L_\alpha(t)),\quad \text{if}\,\, 0<\alpha<1,
		\end{equation}
		and
		\begin{equation}
		(W(nt))^{1/a_n}\Longrightarrow\mathfrak S_\alpha(t/\mu),\quad \text{if}\,\, 1<\alpha<2,
		\end{equation}
		where $a_n$ satisfies
		$$\lim_{n\to\infty}\frac{n h(a_n)}{a_n^\alpha}=\frac{\Gamma(2-\alpha)|\cos(\pi\alpha/2)|}{\alpha-1}$$
		and $\mu=\mathds E(D_1),$ where $\{\mathfrak S_\alpha(t),t\geq 0\}$ represents the circular stable process   
	\begin{equation}
	\mathfrak S_\alpha(t):=e^{i S_\alpha(t)}.
	\end{equation}

	\end{theorem}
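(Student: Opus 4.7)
My plan is to reduce everything to a weak convergence statement for the real-valued process $\mathfrak X(n\cdot)$ and then apply the continuous mapping theorem with the map $x \mapsto e^{ix}$, which is continuous from $\mathbb R$ to $\mathbb S$. Thus it suffices to prove
\[
\mathfrak X(n\cdot)/n \Longrightarrow S_\alpha(L_\alpha(\cdot)) \quad \text{if } 0<\alpha<1,
\qquad
\mathfrak X(n\cdot)/a_n \Longrightarrow S_\alpha(\cdot/\mu) \quad \text{if } 1<\alpha<2,
\]
in the Skorokhod space. The central object is the position at the $k$-th renewal time, namely $R_k := V_0 \sum_{j=1}^{k} (-1)^{j-1} D_j$, in terms of which $\mathfrak X(t) = R_{M_t-1} + V_0(t-T_{M_t-1})(-1)^{M_t-1}$ up to the customary boundary/index adjustments. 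Pairing consecutive summands, the variables $X_j := D_{2j-1} - D_{2j}$ are i.i.d., symmetric, and (since $D_1$ is in the domain of attraction of a stable law with index $\alpha$) lie in the normal domain of attraction of a symmetric $\alpha$-stable law. The functional invariance principle for heavy-tailed i.i.d.\ summands therefore yields $R_{[k\,\cdot\,]}/a_k \Longrightarrow V_0\, S_\alpha(\cdot)$; jointly, $(R_{[n\cdot]}, T_{[n\cdot]})/a_n \Longrightarrow (V_0 S_\alpha(\cdot), U_\alpha(\cdot))$, which is exactly the coupled pair built out of the two independent one-sided stable processes $S_\alpha^{(1)}, S_\alpha^{(2)}$ introduced in the paper.

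For $1 < \alpha < 2$, the existence of $\mu=\mathds E[D_1]<\infty$ together with the strong law gives $T_n/n \to \mu$ a.s., hence $M_{nt}/n \to t/\mu$ a.s., so the random time change is deterministic in the limit. Applying the standard time-change lemma (see Whitt or Billingsley) to the functional convergence $R_{[ns]}/a_n \Longrightarrow V_0 S_\alpha(s)$ with the deterministic change $M_{nt}/n \to t/\mu$ yields $R_{M_{nt}-1}/a_n \Longrightarrow V_0 S_\alpha(t/\mu)$. The residual boundary term $V_0(nt-T_{M_{nt}-1})(-1)^{M_{nt}-1}$ is bounded in absolute value by $D_{M_{nt}}$, which after division by $a_n$ is negligible in probability (the maximum of the first $O(n)$ summands is of order $a_n$, but a \emph{single} summand is of smaller order). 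By symmetry of $S_\alpha$, $V_0 S_\alpha(t/\mu) \stackrel d = S_\alpha(t/\mu)$, and the continuous mapping theorem closes the case.

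For $0 < \alpha < 1$ the argument is genuinely coupled: inverting $T_{[ns]}/a_n \Longrightarrow U_\alpha(s)$ gives $M_{nt}/n^\alpha \Longrightarrow L_\alpha(t)$, and self-similarity reconciles the scalings, since $a_{n^\alpha}\sim n$. Combining the joint functional convergence of $(R_{[ks]}/a_k, T_{[ks]}/a_k)$ with the inversion relation and applying a CTRW limit theorem in the spirit of Meerschaert--Scheffler yields $R_{M_{nt}-1}/n \Longrightarrow V_0 S_\alpha(L_\alpha(t))$; the boundary term is again controlled as above. The main obstacle here is precisely the justification of the random time change in the last step: since $S_\alpha$ has jumps while $L_\alpha$ is continuous (as the inverse of a strictly increasing subordinator), one must choose an appropriate Skorokhod topology (typically $M_1$, or $J_1$ with the observation that the jumps of $L_\alpha$ coincide with the flat stretches of $U_\alpha$) so that the composition map is continuous on a set of probability one. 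Once this is in place, the continuous mapping theorem with $x\mapsto e^{ix}$ delivers the stated convergence to $\mathfrak S_\alpha(L_\alpha(t))$.
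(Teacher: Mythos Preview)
Your overall strategy coincides with the paper's: reduce to weak convergence of the real-valued process $\mathfrak X(n\cdot)$ (suitably rescaled) and then push through the continuous mapping theorem with $x\mapsto e^{ix}$. The paper's proof, however, is a one-line affair: it simply invokes Theorem~1.3 of Horv\'ath--Shao (1998), which already establishes the functional limit for $\mathfrak X(n\cdot)/n$ (case $0<\alpha<1$) and $\mathfrak X(n\cdot)/a_n$ (case $1<\alpha<2$), and then applies the continuous mapping theorem exactly as in the proof of Theorem~\ref{th:conv}(ii).

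What you do differently is to \emph{reprove} (in outline) the Horv\'ath--Shao result via CTRW limit theory: pair consecutive waiting times, obtain the joint functional convergence of $(R_{[n\cdot]},T_{[n\cdot]})$ to the coupled pair $(S_\alpha,U_\alpha)$, and then compose with the (random or deterministic) time change. This is a legitimate route and recovers the same statement, but it re-derives machinery that is already packaged in the cited reference. In particular, the delicate issue you flag for $0<\alpha<1$ --- continuity of the composition map in an appropriate Skorokhod topology --- is precisely what is handled inside \cite{hor}; by citing that theorem the paper sidesteps this technicality entirely. Your treatment of the residual boundary term and the case $1<\alpha<2$ is sound, and your remark $a_{n^\alpha}\sim n$ is morally right but needs the usual care with slowly varying factors. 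In short: your proof is correct in spirit and more self-contained, while the paper's is shorter and cleaner because it leans on an existing limit theorem.
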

	\begin{proof}  The proof follows exploits Theorem 1.3 in \cite{hor} and the same arguments of the proof of point $(ii)$ in Theorem \ref{th:conv}.
	\end{proof}
	
	Let $\mathfrak f_\alpha(x,t)$ and $\mathfrak f_\alpha(x,u,t,s)$ be the density function of $L_\alpha(t),t\geq 0,$ and $(L_\alpha(t), L_\alpha(s)), t\geq s\geq 0, $ respectively. It is interesting to derive the moments of limit processes appearing in the previous theorem. 
	\begin{theorem}
		For $0<\alpha<1,$
		\begin{equation}\label{eq:circstab1}
		{\mathds  E}\left[\mathfrak S_\alpha(L_\alpha(t))\right]^q=E_{\alpha,1}(-q^\alpha t^\alpha),\quad q>0,
		\end{equation}
		where $E_{\alpha,1}(x)=\sum_{k=0}^\infty \frac{x^k}{\Gamma(\alpha k+1)}, x\in\mathbb R,$ is the Mittag-Leffler function and
		\begin{align}\label{eq:circstab2}
		&{\mathds  E}[\mathfrak S_\alpha(L_\alpha(t))^{q_1}\mathfrak S_\alpha(L_\alpha(s))^{q_2}]\\
		&={\mathds  E}\exp\left\{-\varpi^\alpha\max\{L_\alpha(t),L_\alpha(s)\}- [(q_1+q_2)^\alpha-\varpi^\alpha]\min\{L_\alpha(t),L_\alpha(s)\}\right\} ,\notag
		\end{align}		where $t>0,s>0,$ $q_1>0,q_2>0,$ and $\varpi:=q_1 \mathds 1_{L_\alpha(t)>L_\alpha(s)}+q_2\mathds 1_{L_\alpha(t)\leq L_\alpha(s)}.$

		For $1<\alpha<2,$
		\begin{equation}\label{eq:circstab3}
		{\mathds  E}[\mathfrak S_\alpha(t)]^q=e^{-	tq^\alpha},\quad q>0,
		\end{equation}
		and 
		\begin{equation}\label{eq:circstab4}
		{\mathds  E}
		[\mathfrak S_\alpha(t)^{q_1}\mathfrak S_\alpha(s)^{q_2}]=\exp\left\{-\eta^\alpha\max\{t,s\}- [(q_1+q_2)^\alpha-\eta^\alpha]\min\{ t,s\}\right\},		\end{equation}
		where $\eta:=q_1 \mathds 1_{t>s}+q_2\mathds 1_{t\leq s}.$
	\end{theorem}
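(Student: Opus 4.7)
The plan is to reduce every moment to a characteristic function of the underlying symmetric stable process $S_\alpha$, and (in the $0<\alpha<1$ case) to the Laplace transform of the inverse stable subordinator $L_\alpha$. The starting observation is that for any $q>0$ and any time $\tau\geq 0$, since $\mathfrak S_\alpha(\tau)=e^{iS_\alpha(\tau)}$, one has $\mathfrak S_\alpha(\tau)^q=e^{iqS_\alpha(\tau)}$, so all the quantities in the statement reduce to expectations of complex exponentials of linear combinations of $S_\alpha$ evaluated at (possibly random) times.

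For the single-time moments in the regime $1<\alpha<2$, equation \eqref{eq:circstab3} is immediate from $\mathds E[e^{iqS_\alpha(t)}]=e^{-t|q|^\alpha}$. For $0<\alpha<1$ I would condition on $L_\alpha(t)$, using that conditionally on its evaluation time the symmetric stable process yields a symmetric $\alpha$-stable characteristic function, to obtain
$$\mathds E[\mathfrak S_\alpha(L_\alpha(t))^q]=\mathds E[e^{-q^\alpha L_\alpha(t)}],$$
and then invoke the classical Laplace-transform identity $\mathds E[e^{-\mu L_\alpha(t)}]=E_{\alpha,1}(-\mu t^\alpha)$ for the inverse $\alpha$-stable subordinator to conclude \eqref{eq:circstab1}.

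For the two-time moments the key device is to split the weighted sum at the smaller time, exploiting the independent-increments property of the Lévy process $S_\alpha$. In the case $1<\alpha<2$ with $t>s$ (so $\eta=q_1$) I would write
$$q_1 S_\alpha(t)+q_2 S_\alpha(s)=q_1[S_\alpha(t)-S_\alpha(s)]+(q_1+q_2)S_\alpha(s),$$
whence independence of increments together with the stable characteristic function give
$$\mathds E\bigl[e^{i[q_1S_\alpha(t)+q_2S_\alpha(s)]}\bigr]=e^{-(t-s)q_1^\alpha}\,e^{-s(q_1+q_2)^\alpha}.$$
The elementary identity $(t-s)\eta^\alpha+s(q_1+q_2)^\alpha=\eta^\alpha\max\{t,s\}+[(q_1+q_2)^\alpha-\eta^\alpha]\min\{t,s\}$ then yields \eqref{eq:circstab4}; the case $t\leq s$ is symmetric with $\eta=q_2$. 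For \eqref{eq:circstab2} I would execute the same decomposition conditionally on $(L_\alpha(t),L_\alpha(s))$: monotonicity of $L_\alpha$ ensures that the selector $\varpi$ is precisely the $q_i$ attached to the larger of the two random evaluation times, and the outer expectation over the joint law of $(L_\alpha(t),L_\alpha(s))$ is left unevaluated since it does not admit a closed form (it would require the joint density $\mathfrak f_\alpha(x,u,t,s)$).

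The main obstacle is essentially bookkeeping: carefully matching the selectors $\eta$ and $\varpi$ to the correct weight $q_i$ dictated by the ordering of the (possibly random) times, and justifying the conditioning in the time-changed case so that, conditionally on $(L_\alpha(t),L_\alpha(s))$, the pair $(S_\alpha(L_\alpha(t)),S_\alpha(L_\alpha(s)))$ behaves as $S_\alpha$ sampled at two fixed ordered times, thereby transferring the Lévy argument verbatim from the untempered setting.
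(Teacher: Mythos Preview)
Your proposal is correct and follows essentially the same route as the paper: reduce the moments of $\mathfrak S_\alpha$ to characteristic functions of $S_\alpha$, condition on $L_\alpha$ (integrate against $\mathfrak f_\alpha$) and invoke Bingham's Laplace-transform identity for \eqref{eq:circstab1}, and use the increment decomposition $q_1S_\alpha(u)+q_2S_\alpha(v)=q_1[S_\alpha(u)-S_\alpha(v)]+(q_1+q_2)S_\alpha(v)$ together with independence of increments for the two-time moments, leaving the outer expectation over $(L_\alpha(t),L_\alpha(s))$ in \eqref{eq:circstab2}. The paper carries this out by writing the integrals against $\mathfrak f_\alpha(u,t)$ and $\mathfrak f_\alpha(u,v,t,s)$ explicitly rather than phrasing it as conditioning, but the argument is the same.
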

	\begin{proof} For the result \eqref{eq:circstab1}, we can write down
		\begin{align*}
		{\mathds  E}[\mathfrak S_\alpha(L_\alpha(t))]^q&=\int_0^\infty {\mathds  E}[e^{iq S_\alpha(u)}]\mathfrak f_\alpha(u,t)\de u\\
		&=\int_0^\infty e^{-u q^\alpha}\mathfrak f_\alpha(u,t)\de u\\
		&=E_{\alpha,1}(-q^\alpha t^\alpha),
		\end{align*}
		where in the last step we have used the result on Laplace transform for the inverse process contained in \cite{bingham}.  Moreover, we get
		\begin{align*}
		&{\mathds  E}[\mathfrak S_\alpha(L_\alpha(t))^{q_1}\mathfrak S_\alpha(L_\alpha(s))^{q_2}]\\
		&={\mathds  E}[\mathfrak S_\alpha(L_\alpha(t))^{q_1}\mathfrak S_\alpha(L_\alpha(s))^{q_2}\mathds 1_{L_\alpha(t)>L_\alpha(s)}]+{\mathds  E}[\mathfrak S_\alpha(L_\alpha(t))^{q_1}\mathfrak S_\alpha(L_\alpha(s))^{q_2}\mathds 1_{L_\alpha(t)\leq L_\alpha(s)}]&\\
		&=\iint_{u>v\geq 0} {\mathds  E}[\mathfrak S_\alpha(u)^{q_1}\mathfrak S_\alpha(v)^{q_2}]\mathfrak f_\alpha(u, v, t,s)\de u\de v+\iint_{v\geq u\geq 0} {\mathds  E}[\mathfrak S_\alpha(u)^{q_1}\mathfrak S_\alpha(v)^{q_2}]\mathfrak f_\alpha(v, u,s,t)\de u\de v\\
		&=\iint_{u>v\geq 0}  {\mathds  E}[e^{iq_1 S_\alpha(u)+ iq_2S_\alpha(v)}]\mathfrak  f_\alpha(u, v, t,s)\de u\de v+\iint_{v\geq u\geq 0}  {\mathds  E}[e^{iq_1 S_\alpha(u)+ iq_2S_\alpha(v)}]\mathfrak \mathfrak f_\alpha(v, u,s,t)\de u\de v\\
		&=\iint_{u>v\geq 0}  {\mathds  E}(e^{iq_1 (S_\alpha(u)-S_\alpha(v))+ i(q_2+q_1)S_\alpha(v)})\mathfrak  f_\alpha(u, v, t,s)\de u\de v\\
		&\quad+\iint_{v\geq u\geq 0}   {\mathds  E}(e^{iq_2 (S_\alpha(v)-S_\alpha(u))+ i(q_2+q_1)S_\alpha(u)})\mathfrak \mathfrak f_\alpha(v, u,s,t)\de u\de v\\
		&=\iint_{u>v\geq 0}  e^{-q_1^\alpha u- [(q_2+q_1)^\alpha -q_1^\alpha] v}\mathfrak  f_\alpha(u, v, t,s)\de u\de v+\iint_{v\geq u\geq 0}   e^{-q_2^\alpha v- [(q_2+q_1)^\alpha -q_2^\alpha] u}\mathfrak \mathfrak f_\alpha(v, u,s,t)\de u\de v,
	%			&=\int_0^\infty \int_0^\infty {\mathds  E}(e^{iq_1 (S_\alpha(u)-S_\alpha(v))+ i(q_2+q_1)S_\alpha(v)})\mathfrak f_\alpha(u,t)\mathfrak f_\alpha(v,s)\de u\de v\\
	%	&=\int_0^\infty \int_0^\infty e^{-\max\{u,v\}- (q_1^\alpha-1)\min\{u,v\}}\mathfrak f_\alpha(u,t)\mathfrak f_\alpha(v,s)\de u\de v\
		\end{align*}
		 which leads to \eqref{eq:circstab2}. The proof of the \eqref{eq:circstab3} and \eqref{eq:circstab4} is now straightforward.
	\end{proof}
	
	\begin{remark}
		In \cite{toaldo}, the authors studied $\{\mathfrak S_\alpha(t),\,t\geq 0\}$ and $\{\mathfrak S_\alpha(L_\alpha(t)),\,t\geq 0\}.$ In particular, they proved that for any  $t>0$
		$$\mathfrak S_\alpha(t)\stackrel{\text{law}}{=}\mathfrak B(2 S_{\alpha/2}^{(j)}(t))$$
		with density function given by
		$$\mu_{\mathfrak S_\alpha}(\theta,t)=\frac{1}{2\pi}+\frac1\pi\sum_{k=1}^\infty e^{-k^\alpha t}\cos(k\theta).$$
		
		Let us introduce the one-dimensional fractional Laplace operator 
		\begin{equation}
		-\left(-\frac{\partial ^2}{\partial x^2}\right)^{\alpha/2}=\frac{\sin(\pi\alpha/2)}{\pi}\int_0^\infty \left(\lambda +\left(-\frac{\partial ^2}{\partial x^2}\right)\right)^{-1}\lambda^{\alpha/2}\de \lambda.
		\end{equation}
		The density function  $\mu_{\mathfrak S_\alpha}$ solves the space-fractional equation
		$$\frac{\partial u}{\partial t}(\theta,t)=-\left(-\frac{\partial ^2}{\partial \theta^2}\right)^{\alpha/2}u(\theta,t),\quad \theta\in[0,2\pi),t\geq 0,$$
		subjected to the initial condition $u(\theta,0)=\delta(\theta).$
	\end{remark}

\bibliographystyle{abbrv}
%\bibliography{biblio}	

\end{document}